\newtheorem{theorem}{Theorem}[section]
\newtheorem{lemma}[theorem]{Lemma}
\newtheorem{corollary}[theorem]{Corollary}
\newtheorem{proposition}[theorem]{Proposition}
\newtheorem{definition}[theorem]{Definition}
\newtheorem{remark}[theorem]{Remark}
\newcommand{\filledbox}{\leavevmode
  \hbox to.77778em{%
  \hfil\vbox to.675em{\hrule width.6em height.6em}\hfil}}
\newcommand{\Rm}{{\mathbb R}}
\newcommand{\eps}{\varepsilon}
\begin{document}
\tabulinesep=1.0mm
\title{Scattering of solutions to the defocusing energy sub-critical semi-linear wave equation in 3D\footnote{MSC classes: 35L71, 35L05}}


\author{Ruipeng Shen\\
Centre for Applied Mathematics\\
Tianjin University\\
Tianjin, China}

\maketitle

\begin{abstract}
  In this paper we consider a semi-linear, energy sub-critical, defocusing wave equation $\partial_t^2 u - \Delta u = - |u|^{p -1} u$ in the 3-dimensional space 
 with $p \in [3,5)$. We prove that if initial data $(u_0, u_1)$ are radial so that $\|\nabla u_0\|_{L^2 (\Rm^3; d\mu)}, \|u_1\|_{L^2 (\Rm^3; d\mu)} \leq \infty$, where $d \mu = (|x|+1)^{1+2\eps}$ with $\eps > 0$, then the corresponding solution $u$ must exist for all time $t \in {\mathbb R}$ and scatter. The key ingredients of the proof include a transformation $\mathbf{T}$ so that $v = \mathbf{T} u$ solves the equation $v_{\tau \tau} - \Delta_y v = - \left(\frac{|y|}{\sinh |y|}\right)^{p-1} e^{-(p-3)\tau} |v|^{p-1}v$ with a finite energy, and a couple of global space-time integral estimates regarding a solution $v$ as above. 
\end{abstract}

\section{Introduction}

The defocusing semi-linear wave equation 
\[
 \left\{\begin{array}{ll} \partial_t^2 u - \Delta u = - |u|^{p-1}u, & (x,t) \in \Rm^3 \times \Rm; \\
 u(\cdot, 0) = u_0; & \\
 u_t (\cdot,0) = u_1 & \end{array}\right.\quad (CP1)
\]
has been extensively studied in the past few decades. This problem is locally well-posed if initial data $(u_0,u_1)$ are contained in the critical Sobolev space $\dot{H}^{s_p} \times \dot{H}^{s_p-1}(\Rm^3)$ with $s_p \doteq 3/2 - 2/(p-1)$. Please see \cite{ls} for more details on the local theory. Suitable solutions also satisfy an energy conservation law:
\[
 E(u, u_t) = \int_{\Rm^3} \left(\frac{1}{2}|\nabla u(\cdot, t)|^2 +\frac{1}{2}|u_t(\cdot, t)|^2 + \frac{1}{p+1}|u(\cdot,t)|^{p+1}\right)\,dx = \hbox{Const}.
\]
The problem of global existence and scattering is much more difficult. In the energy critical case $p=5$, M. Grillakis \cite{mg1} proved that any solution with initial data in the space $\dot{H}^1 \times L^2(\Rm^3)$ must scatter in both two time directions. In other words, the asymptotic behaviour of any solution mentioned above resembles that of a free wave. It is conjectured that a similar result holds for other exponents $p$ as well: Any solution to (CP1) with initial data $(u_0,u_1) \in \dot{H}^{s_p} \times \dot{H}^{s_p-1}$ must exist for all time $t \in \Rm$ and scatter in both two time directions. This conjecture has not been proved yet, as far as the author knows, in spite of some progress:
\begin{itemize}
 \item It has been proved that if a radial solution $u$ with a maximal lifespan $I$ satisfies an a priori estimate
 \begin{equation}
  \sup_{t \in I} \left\|(u(\cdot,t), u_t(\cdot, t))\right\|_{\dot{H}^{s_p} \times \dot{H}^{s_p-1} (\Rm^3)} < +\infty, \label{uniform UB}
 \end{equation}
 then $u$ is a global solution in time and scatters.  The proof uses the standard compactness-rigidity argument, where the radial assumption plays a crucial role in the rigidity part. The details can be found in \cite{km} for $p>5$, \cite{shen2} for $3<p<5$ and \cite{cubic3dwave} for $1+\sqrt{2}<p\leq 3$.  The author would also like to mention that the same result still holds in the non-radial case if $p>5$, see \cite{kv2}. Please note that our assumption \eqref{uniform UB} is automatically true in the energy critical case $p=5$, thanks to the conservation law of energy. When $p$ is other than $5$, however, nobody has ever found a way to actually prove this a priori estimate without additional assumptions on initial data.
 \item In the energy sub-critical case, the scattering result can be proved via conformal conservation laws if initial data satisfy an additional regularity-decay condition
  \begin{equation} \label{condition1}
    \int_{\Rm^3} \left[(|x|^2+1) (|\nabla u_0 (x)|^2 + |u_1(x)|^2) + |u_0(x)|^2 \right] dx < \infty.
  \end{equation}
See \cite{conformal2, conformal} for more details. Please pay attention that the radial assumption is not necessary in this argument.  
\end{itemize}

\paragraph{Main Result} In this work we assume that initial data are still radial but satisfy a weaker decay condition than \eqref{condition1} and prove that the corresponding solution to (CP1) scatters. Let us first introduce our main theorem

\begin{theorem} \label{main1}
Assume that $A, \eps$ are positive constants and $3 \leq p<5$. Let $(u_0, u_1) \in \dot{H}^1 \times L^2$ be radial initial data so that 
\begin{align*}
  &\|\nabla u_0\|_{L^2 (\Rm^3; d\mu)}, \|u_1\|_{L^2 (\Rm^3; d\mu)} \leq A,& &d \mu = (|x|+1)^{1+2\eps} dx.&
\end{align*}
Then the corresponding solution $u$ to (CP1) scatters in both two time directions with 
\[ 
 \|u\|_{L^{2(p-1)} L^{2(p-1)} (\Rm \times \Rm^3)} \leq C(A, \eps, p) < \infty. 
\]
Here the upper bound $C(A,\eps,p)$ are solely determined by the values of $A$, $\eps$ and $p$.
\end{theorem}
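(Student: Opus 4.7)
The plan is to exploit the transformation $\mathbf{T}$ announced in the abstract to convert the scattering problem for $u$ into a finite-energy problem for a new unknown $v$ with an asymptotically decaying nonlinear coefficient. Working with radial data, I would introduce hyperbolic coordinates inside the forward light cone via $r = e^{\tau}\sinh\rho$, $t = e^{\tau}\cosh\rho$, and set $v(y,\tau) = \tfrac{e^{\tau}\sinh|y|}{|y|}\,u\bigl(e^{\tau}\sinh|y|,\,e^{\tau}\cosh|y|\bigr)$, viewed as a radial function of $y \in \Rm^3$. A direct computation using the conformal invariance of the $1{+}1$ wave operator applied to $w = r u$ would yield exactly the equation stated in the abstract, with the factor $(|y|/\sinh|y|)^{p-1} e^{-(p-3)\tau}$ emerging from the combined Jacobians of the coordinate change and the radial reduction. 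The associated energy
\[
E(v;\tau) = \int_{\Rm^3}\!\Bigl[\tfrac12|v_\tau|^2 + \tfrac12|\nabla_y v|^2 + \tfrac{1}{p+1}\bigl(\tfrac{|y|}{\sinh|y|}\bigr)^{p-1}e^{-(p-3)\tau}|v|^{p+1}\Bigr]dy
\]
is non-increasing in $\tau$ for $p \geq 3$, the only uncancelled term in $\frac{d}{d\tau}E$ being $-\tfrac{p-3}{p+1}\int(|y|/\sinh|y|)^{p-1}e^{-(p-3)\tau}|v|^{p+1}dy \leq 0$; for negative $\tau$ the energy can grow, but only at the controlled rate $e^{(p-3)|\tau|}$.

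Step one would be to check that the weighted hypothesis $\|\nabla u_0\|_{L^2(d\mu)},\|u_1\|_{L^2(d\mu)}\leq A$ translates into a finite initial energy for $v$. The change-of-variables Jacobian combined with the weight $(|x|+1)^{1+2\eps}$ produces, after integration on the relevant initial hyperboloid $\{\tau=\tau_0\}$, a bound of the form $E(v;\tau_0)\leq C(A,\eps,p)$. The region outside the forward light cone, which hyperbolic coordinates do not see, would be handled either by shifting the vertex of the cone backwards in time so that the data at $t=0$ sits strictly inside the shifted cone, or by invoking a conformal-energy argument in the complementary region $|x|\gtrsim |t|$, where the weight $(|x|+1)^{1+2\eps}$ still dominates all the terms appearing in the conformal identity of \cite{conformal, conformal2}.

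With a finite-energy Cauchy problem for $v$ in hand, the core of the proof is to establish the global spacetime bound $\|v\|_{L^{2(p-1)}L^{2(p-1)}(\Rm\times\Rm^3)} < \infty$, which is precisely the key integral estimate mentioned in the abstract. For $p > 3$ the factor $e^{-(p-3)\tau}$ makes the equation asymptotically linear as $\tau \to +\infty$, so a Morawetz-type monotonicity for $v$ combined with free-wave Strichartz estimates in $y$ should close the argument in positive time, while local well-posedness plus the controlled energy growth handles negative $\tau$. Undoing the change of variables then converts the bound on $v$ into the desired estimate $\|u\|_{L^{2(p-1)}L^{2(p-1)}} \leq C(A,\eps,p)$ on the image of the cone, and small-data scattering in the remaining spacetime region together with the standard 3D wave scattering criterion yields the conclusion.

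The main obstacle is the endpoint $p = 3$, where the factor $e^{-(p-3)\tau}\equiv 1$ leaves the $v$-equation genuinely nonlinear for all $\tau$ and the energy is exactly conserved. In this regime one must lean on the spatial factor $(|y|/\sinh|y|)^{2}$, which decays exponentially in $|y|$ and effectively localizes the nonlinear interaction to $|y|\lesssim 1$; a Morawetz-type identity tailored to this exponential spatial weight, together with the finite energy, should still yield the spacetime bound without any compactness-rigidity input, but verifying the right monotonicity identity and closing the estimate without losing the exponential weight is the most delicate technical point of the argument.
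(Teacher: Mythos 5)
Your overall architecture matches the paper's: the transformation $\mathbf{T}$ (with vertex shifted to a negative time $t_0$), the monotone energy for $v$ driven by the damping factor $e^{-(p-3)\tau}$, a Morawetz-type estimate for $v$ combined with the damping integral and the radial bound $|v|\lesssim \|v\|_{\dot H^1}|y|^{-1/2}$ to interpolate up to the exponent $2(p-1)$, and the observation that at $p=3$ the Morawetz estimate alone closes the argument. That part of your plan is sound and is essentially what the paper does in Sections 5--6 and Proposition \ref{Morawetz}.

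The genuine gap is in your ``step one'' and in your treatment of the region outside the light cone, and it is not a technicality --- it is the main point of the paper. First, the data at $t=0$ can never ``sit strictly inside the shifted cone'': the slice $\{t=0\}\cap\{t-t_0>|x|\}$ is a bounded ball, so no choice of $t_0$ captures all of $\Rm^3$. Second, your fallback --- a conformal-energy argument in the exterior region --- requires exactly the weight $(|x|^2+1)$ of condition \eqref{condition1}, which the hypothesis $(|x|+1)^{1+2\eps}$ (with $\eps$ small) does not supply; invoking it would reduce the theorem to the already-known case. Third, the energy $E(v;\tau_0)$ lives on a hyperboloid that is asymptotic to the light cone, so finiteness of $E(v;\tau_0)$ is \emph{not} a Jacobian computation on the initial data: it requires quantitative decay of $u$, $w_t+w_r$ and $w_t-w_r$ in the exterior region $|x|>t+R$ for \emph{all} $t\geq 0$. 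The paper's substitute for all three of these is its Section 4: a $1{+}1$-dimensional d'Alembert representation for $w=ru$ (Lemma \ref{lm1}) fed into a bootstrap on the Duhamel iterates to prove $|u(x,t)|\lesssim |x|^{-1}(|x|-t)^{-\delta}$ for $|x|>t+R$ (Proposition \ref{pointwise estimate}), together with weighted $L^2$ bounds on $w_t\pm w_r$ along characteristics (Corollary \ref{cor1}). These estimates simultaneously (a) bound $\int\!\!\int_{|x|>t+R}|u|^{2(p-1)}$ directly, replacing your ``small-data scattering in the remaining region,'' and (b) control the large-$|y|$ part of $E(v;\tau)$; the bounded-$|y|$ part is then handled by an averaging trick, bounding $\inf_{\tau\in[-1,0]}E_0^{(2)}(\tau)$ by a spacetime integral of the energy density of $u$ over a compact region. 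Without some version of this exterior decay machinery your argument cannot start, so you should regard it as the missing key lemma rather than a detail to be filled in.
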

\noindent Here are some remarks regarding the initial data in the main theorem. 
\begin{remark}
The initial data $(u_0, u_1)$ satisfy the inequality 
\begin{align*}
 \int_{\Rm^3} \!\left(|\nabla u_0|^\frac{3}{2} + |u_1|^\frac{3}{2} \right) dx \leq &2\left[\int_{\Rm^3} \!\left(|\nabla u_0|^2 + |u_1|^2 \right)(1+|x|)^{1+2\eps}\, dx\right]^{3/4}  \left[\int_{\Rm^3} \! (1+|x|)^{-3-6\eps}\, dx\right]^{1/4}\\
  \leq & C(A, \eps) < \infty. 
\end{align*}
In other words we have $(u_0,u_1) \in \dot{W}^{1, 3/2} \times L^{3/2}$. It immediately follows that $(u_0,u_1) \in \dot{H}^{s_p} \times \dot{H}^{s_p-1}$ by the Sobolev embedding $\dot{W}^{1, 3/2} \times L^{3/2} \hookrightarrow \dot{H}^{1/2} \times \dot{H}^{-1/2}$ and an interpolation. 
\end{remark}

\begin{remark} \label{integral in r}
The radial assumption implies that the initial data $(u_0, u_1)$ satisfy
 \[
 \int_0^\infty \left(|\partial_r u_0(r)|^2 + |u_1(r)|^2\right) r^{3+2\eps} dr \leq (1/4\pi) A^2.
\]
\end{remark}

\begin{remark} \label{point-wise es and energy}
 Any pair $(u_0, u_1)$ as in Theorem \ref{main1} comes with a finite energy
 \begin{align*}
  E(u_0, u_1) = \int_{\Rm^3} \left[\frac{1}{2}|\nabla u_0(x)|^2 + \frac{1}{2} |u_1(x)|^2 + \frac{1}{p+1}|u_0(x)|^{p+1}\right] dx \leq C(A) < \infty.
 \end{align*}
 In addition, $u_0$ satisfies a point-wise estimate $ |u_0 (x)| \leq A |x|^{-1-\eps}$.
\end{remark}

\begin{proof}
By Remark \ref{integral in r} we have ($0<r_1<r_2<\infty$)
\begin{align}
 \left|u_0(r_1) - u_0 (r_2)\right| & \leq \int_{r_1}^{r_2} |\partial_r u_0(r)| dr \leq \left(\int_{r_1}^{r_2} |\partial_r u_0(r)|^2 r^{3+2\eps} dr\right)^{1/2} 
 \left(\int_{r_1}^{r_2} r^{-3-2\eps} dr\right)^{1/2}\nonumber \\
 &\leq A r_1^{-1-\eps}. \label{diff_limit}
\end{align}
Next we recall the point-wise estimate for radial $\dot{H}^1$ functions $|u_0(x)| \leq C \|u_0\|_{\dot{H}^1}|x|^{-1/2}$ as given in Lemma 3.2 of \cite{km},
make $r_2 \rightarrow \infty$ in the inequality \eqref{diff_limit} above and obtain a point-wise estimate $ |u_0 (x)| \leq A |x|^{-1-\eps}$. Furthermore, we can combine this point-wise estimate with the Sobolev embedding $\dot{H}^1 (\Rm^3) \hookrightarrow L^6 (\Rm^3)$ to conclude $\|u_0\|_{L^{p+1}(\Rm^3)} \leq C(A)$. This immediately gives a finite upper bound on the energy. 
\end{proof}

\paragraph{The idea} In order to prove the main theorem, we need to show the following step by step. 
\begin{itemize}
 \item The solution $u$ is defined for all time $t \in \Rm$.
 \item The function $v = \mathbf{T} u$ defined by ($t_0$ is a time to be determined)
 \[
   v(y, \tau)  = \frac{\sinh |y|}{|y|} e^\tau u \left( e^\tau \frac{\sinh |y|}{|y|}\cdot y, t_0 + e^\tau \cosh |y|\right), \quad (y,\tau) \in \Rm^3 \times \Rm
 \]
 solves the following non-linear wave equation with a finite energy
\[
 v_{\tau \tau} - \Delta_y v = - \left(\frac{|y|}{\sinh |y|}\right)^{p-1} e^{-(p-3)\tau} |v|^{p-1}v. \quad (\hbox{CP2})
\]
\item The solution $v$ satisfies a few space-time integral estimates. 
\item We rewrite the information about $v$ obtained in the previous step in term of $u$ and finally conclude $\|u\|_{L^{2(p-1)} L^{2(p-1)}(\Rm \times \Rm^3)} < \infty$. This is equivalent to the scattering of $u$, as shown in Subsection \ref{sec: equivalent condition of scattering}.
\end{itemize}

\noindent The transformation from $u$ to $v$ above is one of the key ingredients of our proof. Its validity can be verified by a basic calculation, as given in Section \ref{sec: transformation}. The author would also like to mention that the transformation can be constructed via two different routes:

\paragraph{Route 1} We can write $\mathbf{T} = \mathbf{T}_2 \circ \mathbf{T}_1$. Here $\mathbf{T}_1$ is a transformation from the set of functions defined on the forward light cone $\{(x,t): t-t_0>|x|\}$ to the set of functions defined on ${\mathbb H}^3 \times \Rm$, whose formula has been given by D. Tataru in the work \cite{tataru}:
\[
 (\mathbf{T}_1 u)(s, \Theta, \tau) = e^\tau u (e^\tau \sinh s \cdot \Theta, t_0 + e^\tau \cosh s).
\] 
Here $(s, \Theta) \in [0,\infty) \times {\mathbb S}^2$ are polar coordinates on the hyperbolic space ${\mathbb H}^3$. One can demonstrate the importance of this transformation by the fact 
\[ 
  (\partial_\tau^2 - \Delta_{{\mathbb H}^3} - 1) \circ \mathbf{T}_1 = e^{2\tau} \mathbf{T}_1 \circ (\partial_t^2 - \Delta).
\]
As a result, if $u$ is a solution to (CP1), then the function $v_1 = \mathbf{T}_1 u$ solves the non-linear shifted wave equation on ${\mathbb H}^3$ (See \cite{wavehyper, subhyper, energyhyper} for Strichartz estimates and local theory on this type of equations)
\begin{equation}
 \partial_\tau^2 v_1 - (\Delta_{{\mathbb H}^3} +1) v_1 = - e^{-(p-3)\tau} |v_1|^{p-1} v_1. \label{equation 102}
\end{equation}
Next we introduce the second transformation\footnote{we need to use the radial assumption on $v_1$ in the definition.} $(\mathbf{T}_2 v_1) (y,\tau) = \frac{\sinh |y|}{|y|} v_1(|y|, \tau)$, whose domain is the set of radial functions on ${\mathbb H}^3 \times \Rm$ and whose range is the set of radial functions on $\Rm^3 \times \Rm$. This transformation satisfies $(\partial_\tau^2 - \Delta_y) \circ \mathbf{T}_2 = \mathbf{T}_2 \circ (\partial_\tau^2 - \Delta_{{\mathbb H}^3} -1)$. A basic calculation shows that if $v_1$ solves \eqref{equation 102}, then $v= \mathbf{T}_2 v_1$ satisfies (CP2). 

\paragraph{Route 2} We have another decomposition $\mathbf{T} = \mathbf{T}_3^{-1} \circ \mathbf{T}_4 \circ \mathbf{T}_3$, where 
\begin{align*}
 &(\mathbf{T}_3 u)(|x|, t) = |x| u(x,t);& &(\mathbf{T}_4 w)(s, \tau) = w(e^\tau \sinh s, t_0 + e^\tau \cosh s).&
\end{align*}
Both $\mathbf{T}_3 u$ and $\mathbf{T}_4 w$ are functions defined on $[0,\infty) \times \Rm$. These two transformations satisfy the commutator identities 
\begin{align*}
 &(\partial_t^2 - \partial_r^2) \circ \mathbf{T}_3 = \mathbf{T}_3 \circ (\partial_t^2 - \Delta_x);& &(\partial_\tau^2 - \partial_s^2) \circ \mathbf{T}_4 = e^{2\tau} \mathbf{T}_4 \circ (\partial_t^2 - \partial_r^2).&
\end{align*}
As a result, if $u$ is a radial solution to (CP1), then $w = \mathbf{T}_3 u$ and $w_1 = \mathbf{T}_4 w$ solve the non-linear wave equations $\partial_t^2 w - \partial_r^2 w = - \frac{1}{r^{p-1}} |w|^{p-1} w$ and $\partial_\tau^2 w_1 - \partial_s^2 w_1 = - e^{-(p-3)\tau} \frac{1}{\sinh^{p-1} s} |w_1|^{p-1} w_1$, respectively. 

\paragraph{The structure of this paper} This paper is organized as follows. In section 2 we collect notations, recall the Strichartz estimates and introduce a local theory for a class of wave equations in the form of $\partial_t^2 u - \Delta u = - \phi(x) e^{-\kappa t} |u|^{p-1} u$ with a function $\phi: \Rm^3 \rightarrow [-1,1]$ and a constant $\kappa \geq 0$. In particular, we combine the energy conservation law with our local theory to conclude that any solution to (CP1) with a finite energy is defined for all time. Next in Section 3 we discuss the global behaviour of solutions to the wave equation above with a suitable coefficient function $\phi(x)$. More precisely, we prove a few global space-time integral estimates if the initial data come with a finite energy, one of which is a Morawetz-type estimate. After all of these preparation work is finished, we prove the main theorem in the last three sections. In Section 4 we start by proving a few preliminary estimates on the solutions $u$ to (CP1). Then we apply the transformation $\mathbf{T}$ and show that $v = \mathbf{T} u$ is indeed a solution to (CP2) in Section 5. In the final section we verify that $v$ has a finite energy, take advantage of the space-time integral estimates we obtained in Section 3, rewrite them in term of $u$ and eventually finish the proof.  

\section{Preliminary Results}

\subsection{Notations}

\paragraph{The $\lesssim$ symbol} We use the notation $A \lesssim B$ if there exists a constant $c$, so that the inequality $A \leq c B$ always holds.  In addition, a subscript of the symbol $\lesssim$ indicates that the constant $c$ is determined by the parameter(s) mentioned in the subscript but nothing else. In particular, $\lesssim_1$ means that the constant $c$ is an absolute constant. 

\paragraph{Radial functions} Let $u(x,t)$ be a spatially radial function. By convention $u(r,t)$ represents the value of $u(x,t)$ when $|x| = r$. 

\paragraph{Linear wave propagation} Given a pair of initial data $(u_0, u_1)$, we define $\mathbf{S}_{L,0}(t) (u_0,u_1)$ to be the solution $u$ of the free linear wave equation $u_{tt} - \Delta u = 0$ with initial data $(u, u_t)|_{t=0} = (u_0, u_1)$. If we are also interested in the velocity $u_t$, we can use the notation 
\begin{align*}
 &\mathbf{S}_L (t) (u_0,u_1) \doteq (u(\cdot, t), u_t(\cdot, t)),& &\mathbf{S}_L (t) \begin{pmatrix} u_0\\ u_1 \end{pmatrix} \doteq \begin{pmatrix} u(\cdot, t)\\ u_t(\cdot, t) \end{pmatrix}.& 
\end{align*}

\subsection{Local theory} 

In this subsection we consider the local theory of the equation 
\begin{equation}
 \left\{\begin{array}{ll} 
 \partial_t^2 v - \Delta v = -\phi(x) e^{-\kappa t} |v|^{p-1} v, & (x,t) \in \Rm^3 \times \Rm;\\
 v(\cdot, t_0) = v_0 \in \dot{H}^1 (\Rm^3); &\\
 v_t (\cdot,t_0) = v_1 \in L^2(\Rm^3). & \end{array} \right. \label{equ3}
\end{equation}
Here $\phi: \Rm^3 \rightarrow [-1,1]$ is a measurable function, $\kappa$ is a nonnegative constant and $p \in [3,5)$. This covers both equations (CP1) and (CP2). 

\begin{definition}
We say that a solution $v$ solves the equation \eqref{equ3} in a time interval $I$ containing $t_0$, if $v$ satisfies 
\begin{itemize} 
 \item $(v(\cdot, t), v_t(\cdot, t)) \in C(I; \dot{H}^1 \times L^2 (\Rm^3))$;
 \item The norm $\|v\|_{L^{2p/(p-3)} L^{2p} (J \times \Rm^3)}$ is finite for any bound closed interval $J \subseteq I$;
 \item The integral equation
\[
 v(\cdot,t) = \mathbf{S}_{L,0}(t-t_0)  (v_0, v_1) + \int_{t_0}^t \frac{\sin ((t-\tau)\sqrt{-\Delta})}{\sqrt{-\Delta}} G(\cdot, \tau, v(\cdot, \tau)) d\tau
\]
holds for all $t \in I$, here $G(x, t, v) =  - \phi(x) e^{-\kappa t} |v|^{p-1}v$.
\end{itemize}
\end{definition}

\paragraph{Strichartz estimates} The basis of our local theory is the following generalized Strichartz estimates. (Please see Proposition 3.1 of \cite{strichartz}, here we use the Sobolev version in $\Rm^3$)

\begin{proposition} \label{strichartz}
Let $2 \leq q_1,q_2 \leq \infty$, $2 \leq r_1, r_2 < \infty$ and $\rho_1, \rho_2, s \in \Rm$ with
\begin{align*}
 &1/{q_i} + 1/{r_i} \leq 1/2, \quad i=1,2;& &&\\
 &1/{q_1} + 3/{r_1} = 3/2 - s' + \rho_1;& &1/{q_2} + 3/{r_2}= 1/2 + s' + \rho_2.&
\end{align*}
Let $v$ be the solution of the following linear wave equation ($t_0 \in I$)
\begin{equation}
\left\{\begin{array}{ll} \partial_t^2 v - \Delta v = F (x,t),  & (x,t)\in \Rm^3 \times I;\\
(v, v_t) |_{t=t_0} = (v_0,v_1) \in \dot{H}^{s'} (\Rm^3) \times \dot{H}^{s'-1}(\Rm^3). \end{array}\right.
\end{equation}
Then there exists a constant independent of $I$ and initial data $(u_0,u_1)$, so that
\begin{align*}
 &\|(v(\cdot, t), v_t(\cdot, t))\|_{C(I;\dot{H}^{s'} \times \dot{H}^{s'-1})} +
 \|D_x^{\rho_1} v\|_{L^{q_1} L^{r_1} (I\times \Rm^3)}\\
 &\qquad  \leq C \left( \|(v_0,v_1)\|_{\dot{H}^{s'} \times \dot{H}^{s'-1}} +
 \|D_x^{-\rho_2} F(x,t)\|_{L^{\bar{q}_2} L^{\bar{r}_2}(I\times \Rm^3)} \right).
\end{align*}
\end{proposition}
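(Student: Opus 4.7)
The plan is to reduce the stated generalized Strichartz inequality to the classical Keel-Tao Strichartz theorem for sharp-admissible pairs, recover the non-sharp gains encoded by $\rho_1$ and $\rho_2$ by commuting with a fractional derivative, and handle the retarded Duhamel term by a duality argument combined with the Christ-Kiselev lemma. First I would establish the homogeneous bound $\|e^{\pm it\sqrt{-\Delta}} f\|_{L^q L^r(\Rm\times\Rm^3)} \lesssim \|f\|_{\dot{H}^{s_0}}$ for every sharp-admissible triple $(q,r,s_0)$, i.e.\ $q \geq 2$, $r < \infty$, $1/q + 1/r \leq 1/2$ and $1/q + 3/r = 3/2 - s_0$. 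The standard route is to frequency-localize via Littlewood-Paley, interpolate the dispersive decay $\|e^{\pm it\sqrt{-\Delta}} P_N f\|_{L^\infty} \lesssim |t|^{-1} N^2 \|f\|_{L^1}$ with the $L^2$ conservation identity, and then apply the abstract $TT^*$ lemma of Keel-Tao with decay rate $|t|^{-1}$. The stated bound at a general $s'$ then follows by choosing $s_0 = s' - \rho_1$ and commuting $D_x^{\rho_1}$ with the propagator, since $\|D_x^{\rho_1} f\|_{\dot{H}^{s'-\rho_1}} = \|f\|_{\dot{H}^{s'}}$; the hypothesis $1/q_1 + 3/r_1 = 3/2 - s' + \rho_1$ is precisely the identity that makes $(q_1, r_1, s' - \rho_1)$ sharp-admissible after this shift.

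For the inhomogeneous piece I would bound the integral $\int_I e^{\pm i(t-\tau)\sqrt{-\Delta}} F(\tau)\, d\tau$ by duality: pairing against a test function in $L^{q_1'} L^{r_1'}$ reduces the problem to the sharp-admissible homogeneous estimate applied to $D_x^{-\rho_2} F$, with the second scaling identity $1/q_2 + 3/r_2 = 1/2 + s' + \rho_2$ ensuring that the forcing pair becomes admissible after the $D_x^{-\rho_2}$ shift. The full-line bound obtained this way is converted into the retarded Duhamel bound $\int_{t_0}^t$ via the Christ-Kiselev lemma, which applies as long as $q_1 > q_2'$. Since the free propagator is translation invariant in $t$, all constants are independent of $I$ and of $t_0$.

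The main obstacle in this scheme is the double-endpoint case of the Keel-Tao machinery at $(q,r) = (2,\infty)$ together with the corresponding failure of Christ-Kiselev at $q_1 = q_2'$. Both pathological cases are explicitly excluded here by the hypotheses $r_i < \infty$ and $1/q_i + 1/r_i \leq 1/2$, so the argument closes without touching the delicate endpoint. The only remaining work is bookkeeping to verify that the two scaling identities in the proposition combine correctly to match the exponents needed in the $TT^*$ duality pairing, which is a routine computation.
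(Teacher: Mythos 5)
The paper does not actually prove this proposition: it is quoted verbatim (in its Sobolev form on $\Rm^3$) from Proposition 3.1 of Ginibre--Velo, \emph{Generalized Strichartz inequality for the wave equation}, J. Funct. Anal. 133 (1995), and the text offers only that citation. So there is no in-paper argument to match yours against. Your outline is nevertheless a correct and standard \emph{modern} proof of the quoted result: frequency-localized dispersive decay interpolated against $L^2$ conservation, the Keel--Tao $TT^*$ machinery on the sharp admissible line, commutation with $D_x^{\rho_1}$ and $D_x^{-\rho_2}$ to absorb the regularity shifts (your bookkeeping of the two gap conditions $1/q_1+3/r_1 = 3/2-s'+\rho_1$ and $1/q_2+3/r_2 = 1/2+s'+\rho_2$ is exactly right once the factor $(-\Delta)^{-1/2}$ in the Duhamel kernel is accounted for), and Christ--Kiselev for the retarded integral, with the correct observation that $q_1 = q_2'$ is excluded because $r_i < \infty$ together with $1/q_i+1/r_i\leq 1/2$ forces $q_1 > 2 \geq q_2'$. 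This route postdates and differs from Ginibre--Velo's original argument, which is built on their own complex-interpolation/duality framework rather than the abstract Keel--Tao lemma; what your approach buys is a shorter, more modular proof, at the cost of invoking two black boxes (Keel--Tao, Christ--Kiselev) that did not exist in 1995. Two points you should flesh out if you were to write this in full: (i) Keel--Tao only gives the estimate on the sharp line $1/q+1/r=1/2$, so for pairs with strict inequality you need the Bernstein/Sobolev embedding step on each Littlewood--Paley piece plus a square-function summation in $L^{q}L^{r}$ (legitimate since $q,r\geq 2$), which is where the gap conditions are really consumed; (ii) the $C(I;\dot H^{s'}\times\dot H^{s'-1})$ part of the conclusion, i.e. the energy-type bound on $(v,v_t)$ itself, should be extracted from the same duality pairing using the pair $(\infty,2)$, which you do not mention explicitly. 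Neither is a gap in substance, only in detail.
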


\paragraph{A fixed-point argument} We first choose specific coefficients $\rho_1 = \rho_2 = 0$, $s'=1$, $(q_1,r_1) = (2p/(p-3), 2p)$, $(q_2,r_2) = (\infty, 2)$ in the Strichartz estimates
\begin{align*}
 &\left\|(v(\cdot,t), v_t(\cdot,t))\right\|_{C([t_1,t_2]; \dot{H}^1 \times L^2)} + \|v\|_{L^{\frac{2p}{p-3}}L^{2p} ([t_1,t_2]\times \Rm^3)}  \\
 &\qquad\quad \leq C_p \left[\|(v(\cdot, t_1), v_t(\cdot,t_1))\|_{\dot{H}^1 \times L^2} + \|(\partial_t^2 -\Delta) v\|_{L^1 L^2 ([t_1, t_2] \times \Rm^3)}\right],
\end{align*}
and observe the inequalities 
\begin{align*}
 \|G(\cdot, \cdot, v)\|_{L^1 L^2 ([t_1, t_2]\times \Rm^3)} &\leq e^{-\kappa t_1} (t_2 -t_1)^{\frac{5-p}{2}} \|v\|_{L^{\frac{2p}{p-3}}L^{2p} ([t_1,t_2]\times \Rm^3)}^p;\\
 \|G(\cdot, \cdot, v_1) -G(\cdot, \cdot, v_2)\|_{L^1 L^2 ([t_1, t_2]\times \Rm^3)} &\leq  \left[\|v_1\|_{L^{\frac{2p}{p-3}}L^{2p} ([t_1,t_2]\times \Rm^3)}^{p-1} + \|v_2\|_{L^{\frac{2p}{p-3}}L^{2p} ([t_1,t_2]\times \Rm^3)}^{p-1}\right] \\
 & \qquad \times e^{-\kappa t_1} (t_2 -t_1)^{\frac{5-p}{2}} \|v_1 -v_2\|_{L^{\frac{2p}{p-3}}L^{2p} ([t_1,t_2]\times \Rm^3)}.
 \end{align*}
A fixed-point argument then shows (Our argument is similar to a lot of earlier works. See \cite{loc1, ls}, for instance.)
\begin{theorem} [Local solution] \label{local existence 2}
Given a time $t_0$ and a pair $(v_0,v_1) \in \dot{H}^1 \times L^2$, then there is a maximal time interval $(t_0-T_{-}(v_0,v_1,t_0), t_0 + T_{+}(v_0,v_1,t_0))$ in which the equation \eqref{equ3} with the initial condition $(v, v_t)|_{t=t_0} = (v_0,v_1)$ has a unique solution $v(x,t)$. In addition we have
\begin{align*}
 T_+ (v_0,v_1,t_0) > T_1 & \doteq C_1(p) e^{2\kappa t_0/(5-p)} \|(v_0,v_1)\|_{\dot{H}^1 \times L^2 (\Rm^3)}^{-2(p-1)/(5-p)};\\
 \|v(x,t)\|_{L^{2p/(p-3)}L^{2p} ([t_0,t_0 +T_1] \times \Rm^3)}  & \leq C_2(p) \|(v_0,v_1)\|_{\dot{H}^1 \times L^2 (\Rm^3)}.
\end{align*}
\end{theorem}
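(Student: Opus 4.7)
The plan is a standard Banach fixed-point argument applied to the Duhamel operator
\[
\Phi(v)(t) = \mathbf{S}_{L,0}(t-t_0)(v_0,v_1) + \int_{t_0}^t \frac{\sin((t-\tau)\sqrt{-\Delta})}{\sqrt{-\Delta}} G(\cdot,\tau,v(\cdot,\tau))\,d\tau,
\]
viewed as a self-map of the closed ball
\[
B_\eta = \left\{v : \|v\|_{L^{2p/(p-3)}L^{2p}([t_0, t_0+T_1]\times \Rm^3)} \leq \eta\right\}
\]
for a length $T_1$ and radius $\eta$ to be pinned down. Writing $A_0 = \|(v_0,v_1)\|_{\dot H^1 \times L^2}$ and applying Proposition \ref{strichartz} with the exponents already selected in the text gives $\|\mathbf{S}_{L,0}(\cdot)(v_0,v_1)\|_{L^{2p/(p-3)}L^{2p}} \leq C_p A_0$, which suggests taking $\eta = 2 C_p A_0$.

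Combining the Strichartz estimate with the two nonlinear bounds already stated yields
\[
\|\Phi(v)\|_{L^{2p/(p-3)}L^{2p}} \leq C_p A_0 + C_p\, e^{-\kappa t_0} T_1^{(5-p)/2} \|v\|^p
\]
and
\[
\|\Phi(v_1) - \Phi(v_2)\| \leq 2 C_p\, e^{-\kappa t_0} T_1^{(5-p)/2} \eta^{p-1}\|v_1 - v_2\|.
\]
If $T_1$ is chosen so that $C_p\, e^{-\kappa t_0} T_1^{(5-p)/2} \eta^{p-1} \leq 1/4$, then $\Phi$ maps $B_\eta$ into itself and is a strict contraction with constant $1/2$. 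Solving this constraint for $T_1$ with $\eta = 2 C_p A_0$ gives precisely $T_1 = C_1(p)\, e^{2\kappa t_0/(5-p)} A_0^{-2(p-1)/(5-p)}$, the claimed lower bound on the forward lifespan. The contraction principle then produces a unique fixed point in $B_\eta$, and its membership in $B_\eta$ supplies the second inequality with $C_2(p)=2C_p$.

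To promote the fixed point to a solution in the sense of the definition, reapply Proposition \ref{strichartz} to $\Phi(v)$ once more: the same estimate also controls $\|(\Phi(v), \partial_t \Phi(v))\|_{C([t_0, t_0+T_1]; \dot H^1 \times L^2)}$ by the right-hand side bounded in the preceding step, so continuity of the flow in the energy space is automatic. Uniqueness on the full interval (not merely inside $B_\eta$) follows by a standard open-and-closed argument, since any two solutions agreeing at $t_0$ coincide on a short interval by the contraction principle and the set of agreement is closed by continuity. The maximal forward interval is then obtained by restarting the fixed-point argument at successive times, and the backward direction is identical after time reversal.

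The proof contains no conceptual obstacle; the only delicate point is the bookkeeping of constants, making sure that the factor $e^{2\kappa t_0/(5-p)}$ and the exponent $-2(p-1)/(5-p)$ on $A_0$ emerge cleanly after inverting the constraint defining $T_1$. The source of these exponents is the Hölder factor $T_1^{(5-p)/2}$ produced by controlling $\|G(\cdot,\cdot,v)\|_{L^1 L^2}$ in time, combined with the homogeneity of the nonlinearity.
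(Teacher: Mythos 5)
Your proposal is correct and follows essentially the same route as the paper, which sets up exactly these Strichartz exponents and the two nonlinear bounds and then invokes a contraction-mapping argument (citing \cite{loc1, ls} for the details you have written out). The bookkeeping is right: inverting the smallness constraint $C_p e^{-\kappa t_0} T_1^{(5-p)/2}\eta^{p-1}\leq 1/4$ with $\eta = 2C_p\|(v_0,v_1)\|_{\dot{H}^1\times L^2}$ does produce the stated factor $e^{2\kappa t_0/(5-p)}$ and exponent $-2(p-1)/(5-p)$, and the membership of the fixed point in the ball yields the second inequality.
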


\begin{remark} \label{d12L4L4}
If $v$ is a solution to \eqref{equ3}, then we have $\|D^{1/2} v\|_{L^4 L^4 ([a,b] \times \Rm^3)} < +\infty$ for any finite bounded interval $[a,b]$ contained in the maximal lifespan of $v$ by the Strichartz estimates. 
\end{remark}
  
\begin{proposition}
Any solution $u$ to (CP1) is global in time, i.e. it has a maximal lifespan $\Rm$. 
\end{proposition}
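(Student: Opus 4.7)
The plan is to combine the local theory of Theorem \ref{local existence 2} with conservation of energy to run a standard continuation argument: if the forward maximal time of existence were finite, the $\dot{H}^1 \times L^2$ norm of $(u, u_t)$ would stay bounded by $\sqrt{2 E(u_0, u_1)}$ on the whole lifespan, yielding a uniform lower bound on the lifespan of restarts and hence contradicting maximality. The backward direction is symmetric, so I focus on the forward maximal time $T_+$.

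First I would identify (CP1) as the special case $\phi \equiv 1$, $\kappa = 0$ of equation \eqref{equ3}, so that Theorem \ref{local existence 2} produces a maximal forward lifespan $[t_0, t_0 + T_+)$ on which $u$ is the unique Strichartz solution. Next, Remark \ref{point-wise es and energy} guarantees $E_0 := E(u_0, u_1) < \infty$, and I would then establish the conservation identity
\[
 E(u(\cdot, t), u_t(\cdot, t)) = E_0, \qquad t \in [t_0, t_0 + T_+).
\]
At the $\dot{H}^1 \times L^2$ regularity this is not a literal chain rule: I would justify it either by approximating $(u_0, u_1)$ with Schwartz-class data, applying classical conservation to the smooth solutions, and passing to the limit via the continuous dependence built into the fixed-point argument, or, equivalently, by differentiating the energy along the integral equation using the additional $L^{2p/(p-3)} L^{2p}$ Strichartz regularity to handle the nonlinear contribution. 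Because $\frac{1}{p+1}|u|^{p+1}\ge 0$, conservation yields the a priori bound
\[
 \|(u(\cdot, t), u_t(\cdot, t))\|_{\dot{H}^1 \times L^2}^2 \leq 2 E_0, \qquad t \in [t_0, t_0 + T_+).
\]

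Finally, suppose for contradiction that $T_+ < \infty$. With $\kappa = 0$ the lifespan lower bound in Theorem \ref{local existence 2} simplifies to $C_1(p)\,\|(v_0,v_1)\|_{\dot{H}^1 \times L^2}^{-2(p-1)/(5-p)}$, which, combined with the a priori bound, gives
\[
 T^* := C_1(p)\,(2 E_0)^{-(p-1)/(5-p)} > 0
\]
as a lower bound for the forward lifespan starting from the data $(u(\cdot, t_*), u_t(\cdot, t_*))$, uniformly in the restart time $t_* \in [t_0, t_0 + T_+)$. Choosing $t_*$ with $t_0 + T_+ - t_* < T^*$ and gluing the restart to $u$ (the two solutions agree on their overlap by uniqueness) extends $u$ strictly past $t_0 + T_+$, contradicting maximality. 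The only nontrivial step is the rigorous justification of energy conservation at $\dot{H}^1 \times L^2$ regularity, but this is by now a standard consequence of the Strichartz control of the local solution; everything else is a direct application of Theorem \ref{local existence 2} and the nonnegativity of the potential.
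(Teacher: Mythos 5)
Your proposal is correct and follows essentially the same route as the paper: energy conservation gives the uniform a priori bound $\|(u(\cdot,t),u_t(\cdot,t))\|_{\dot H^1\times L^2}\lesssim E^{1/2}$, which combined with the lifespan lower bound of Theorem \ref{local existence 2} (with $\kappa=0$) forbids finite-time blow-up, and time reversibility handles the backward direction. Your extra care about justifying energy conservation at $\dot H^1\times L^2$ regularity and about the finiteness of $E_0$ via Remark \ref{point-wise es and energy} is a reasonable elaboration of what the paper leaves implicit.
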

\begin{proof}
The conservation law of energy guarantees that the norm $\|(u(\cdot,t), u_t(\cdot,t))\|_{\dot{H}^1 \times L^2} \lesssim E^{1/2}$ is uniformly bounded for all time $t$ in the maximal lifespan of $u$. The combination of this fact and Theorem \ref{local existence 2}  implies that $u$ is well-defined for all $t > 0$. Since (CP1) is time-invertible, we are able to conclude that the maximal lifespan of $u$ must be $\Rm$. 
\end{proof}

\paragraph{Perturbation theory} Next let us consider the continuous dependence of the solutions to \eqref{equ3} on the initial data. The special case with $\phi(x) \equiv 1$ and $\kappa =0$ has been proved in Appendix of \cite{shen2}. We can prove the general case in exactly the same way. 

\begin{theorem} \label{perturbation theory 2}
Let $\tilde{v}$ be a solution of equation \eqref{equ3} in a bounded time interval $I$ with initial data $(\tilde{v}_0,\tilde{v}_1)$, so that
\begin{align*}
 &\|(\tilde{v}_0,\tilde{v}_1)\|_{\dot{H}^1 \times L^2} < \infty;& &\|\tilde{v}\|_{L^{2p/(p-3)}L^{2p}(I\times \Rm^3)} < M.&
\end{align*}
There exist two constants $\eps_0 (I,M), C(I,M)>0$, such that if $(v_0,v_1) \in \dot{H}^1 \times L^2$ satisfy 
\[
 \|(v_0 - \tilde{v}_0,v_1- \tilde{v}_1)\|_{\dot{H}^1 \times L^2} < \eps_0 (I,M),
\]
then the corresponding solution $v$ of \eqref{equ3} with initial data $(v_0,v_1)$ is well-defined in $I$ so that
\begin{align*}
 \|v - \tilde{v}\|_{L^{2p/(p-3)}L^{2p} (I \times \Rm^3)} & \leq C(I,M) \|(v_0 - \tilde{v}_0,v_1- \tilde{v}_1)\|_{\dot{H}^1 \times L^2};\\
 \left\|\begin{pmatrix} v(\cdot, t) \\ v_t(\cdot,t)\end{pmatrix} - \begin{pmatrix} \tilde{v}(\cdot, t)\\ \tilde{v}_t (\cdot, t)\end{pmatrix} \right\|_{C(I; \dot{H}^1 \times L^2)} & \leq
 C(I,M) \|(v_0 - \tilde{v}_0,v_1- \tilde{v}_1)\|_{\dot{H}^1 \times L^2}.
\end{align*}
\end{theorem}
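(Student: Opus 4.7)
The plan is to follow the standard perturbative scheme, exactly as in the appendix of \cite{shen2} cited in the paragraph preceding the statement, and verify that the two new features of equation \eqref{equ3}, namely the coefficient $\phi(x) \in [-1,1]$ and the exponential factor $e^{-\kappa t}$, introduce no essential difficulty. Since $|\phi(x)| \le 1$ and since $I$ is bounded, the factor $e^{-\kappa t}$ is bounded by a constant $C(I,\kappa)$ on $I$; these two pieces of data can therefore simply be absorbed into the constants throughout the argument.

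First I would subdivide $I$ into a finite union $I = \bigcup_{j=1}^N I_j$ of consecutive closed subintervals such that
\[
 \|\tilde v\|_{L^{2p/(p-3)} L^{2p}(I_j \times \Rm^3)} < \eta,
\]
with $\eta = \eta(p, I, \kappa) > 0$ a small parameter to be chosen and $N = N(M,\eta)$ depending only on $M$ and $\eta$. Setting $w = v - \tilde v$, the difference satisfies
\[
 \partial_t^2 w - \Delta w = -\phi(x) e^{-\kappa t}\left(|v|^{p-1} v - |\tilde v|^{p-1} \tilde v\right),
\]
with initial data $(v_0 - \tilde v_0, v_1 - \tilde v_1)$ at $t_0$. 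The pointwise bound $\bigl||v|^{p-1}v - |\tilde v|^{p-1}\tilde v\bigr| \lesssim_p (|v|^{p-1} + |\tilde v|^{p-1})|w|$, together with the H\"older computation already carried out just before Theorem~\ref{local existence 2}, gives on each $I_j$
\[
 \|\phi e^{-\kappa t}(|v|^{p-1}v - |\tilde v|^{p-1}\tilde v)\|_{L^1 L^2(I_j \times \Rm^3)} \le C(I,\kappa,p) \, |I_j|^{(5-p)/2} \bigl(\|v\|_{L^q L^{2p}(I_j)}^{p-1} + \|\tilde v\|_{L^q L^{2p}(I_j)}^{p-1}\bigr) \|w\|_{L^q L^{2p}(I_j)},
\]
where $q = 2p/(p-3)$.

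Applied on $I_j$, the Strichartz estimate of Proposition~\ref{strichartz} yields, under the bootstrap assumption $\|w\|_{L^q L^{2p}(I_j)} \le \eta$, an inequality of the form
\[
 \|w\|_{L^q L^{2p}(I_j)} + \|(w,w_t)\|_{C(I_j;\dot H^1 \times L^2)} \le C_1 \|(w,w_t)|_{t = t_{j-1}}\|_{\dot H^1 \times L^2} + C_2(I,\kappa,p)\,\eta^{p-1}\, \|w\|_{L^q L^{2p}(I_j)}.
\]
Choosing $\eta$ so that $C_2 \eta^{p-1} \le 1/2$, the $w$-term can be absorbed on the left and one obtains $\|w\|_{L^q L^{2p}(I_j)} + \|(w,w_t)|_{t=t_j}\|_{\dot H^1 \times L^2} \le 2C_1 \|(w,w_t)|_{t=t_{j-1}}\|_{\dot H^1 \times L^2}$. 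Iterating this bound from $j=1$ to $j=N$, the endpoint $\dot H^1\times L^2$ norm grows by at most a factor $(2C_1)^N$. Finally I would choose $\eps_0(I,M) = \eta/(2C_1)^N$ (times a further safety factor) so that the bootstrap hypothesis $\|w\|_{L^q L^{2p}(I_j)}\le \eta$ is self-consistent on every $I_j$, which is justified by a standard continuity argument in $t$. Summing the estimates over $j$ yields both conclusions of the theorem with $C(I,M) = C(I,M,\kappa,p)$.

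The only step requiring genuine care is the self-consistency of the bootstrap: one must confirm that for every $t_\ast \in I_j$ the quantity $\|w\|_{L^q L^{2p}([t_{j-1},t_\ast]\times \Rm^3)}$ is a continuous function of $t_\ast$ (this uses Remark~\ref{d12L4L4}, which supplies the finite intermediate Strichartz norm), so that the set on which the bootstrap hypothesis holds is both open and closed in $I_j$ and hence equal to $I_j$. Aside from this standard point, the presence of $\phi$ and of $e^{-\kappa t}$ is purely cosmetic; the argument is otherwise identical to the proof given in the appendix of \cite{shen2}.
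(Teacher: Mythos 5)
Your proposal is correct and follows the same route the paper intends: the paper simply remarks that the case $\phi\equiv 1$, $\kappa=0$ is proved in the appendix of \cite{shen2} and that the general case is identical, which is precisely the standard interval-subdivision plus bootstrap argument you carry out, with $|\phi|\le 1$ and the boundedness of $e^{-\kappa t}$ on the bounded interval $I$ absorbed into the constants. No gaps.
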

  
\section{A Wave Equation with a Time Dependent Nonlinearity}

In this section we discuss the global behaviour of the solutions to the equation 
\begin{equation}
 \left\{ \begin{array}{ll} v_{tt} - \Delta u = - \phi(x) e^{-\kappa t} |v|^{p-1}v, & (x, t) \in \Rm^3 \times \Rm;\\
 v(\cdot, t_0) = v_0 \in \dot{H}^1(\Rm^3)\cap L^{p+1} (\Rm^3; \phi(x) dx); &\\
 v_t(\cdot, t_0) = v_1 \in L^2 (\Rm^3). &
 \end{array} \right. \label{equ4}
\end{equation}
Here we assume that $p \in [3,5)$, $\kappa \geq 0$ are constants and $\phi: \Rm^3 \rightarrow [0,1]$ is a measurable function. The equation (CP2) corresponds to the case with $\kappa = p-3$ and $\phi(x) = \left(\frac{|x|}{\sinh |x|}\right)^{p-1}$. In this case the parameter $\kappa>0$ whenever $p>3$. 

\subsection{Monotonicity of the Energy}
 
Now let us consider the ``energy'' defined by 
\[
 E(t) = \int_{\Rm^3} \left[\frac{1}{2}|\nabla_x v(x, t)|^2 + \frac{1}{2}|v_t(x,t)|^2 + e^{-\kappa t} \phi(x) \frac{|v(x,t)|^{p+1}}{p+1}\right] dx.
\]
If $u$ is sufficiently smooth and decays sufficiently fast near infinity, we can differentiate and obtain  
\begin{align*}
 E'(t) = & \int_{\Rm^3} \left[\nabla v \nabla v_t + v_t v_{tt} + e^{-\kappa t} \phi(x) |v|^{p-1} v v_t -\kappa e^{-\kappa t} \phi \frac{|v|^{p+1}}{p+1}\right] dx \\
=  & \int_{\Rm^3} v_t \left(-\Delta v + v_{tt} + e^{-\kappa t} \phi |v|^{p-1} v\right)dx - \frac{\kappa}{p+1} \int_{\Rm^3} e^{-\kappa t} \phi |v|^{p+1} dx\\
 = & - \frac{\kappa}{p+1} \int_{\Rm^3} e^{-\kappa t} \phi(x) |v(x,t)|^{p+1} dx \leq 0. 
\end{align*}
One can verify that this formula of $E'(t)$ works for general solutions $v$ of the Cauchy problem \eqref{equ4} as well by standard smooth approximation and cut-off techniques. Therefore we have 

\begin{proposition} \label{damping} 
Let $v$ be a solution to the Cauchy problem \eqref{equ4} in a time interval $[t_0, t_0 +T_+)$ with $E(t_0) < \infty$. 
\begin{itemize}
 \item If $\kappa >0$, then $E(t)$ is a non-increasing function of $t \in [t_0,t_0 +T_+)$. In addition, we have the integral estimate 
\[
 \int_{t_0}^{t_0 + T_+}\!\! \int_{\Rm^3} e^{-\kappa t} \phi(x) |v(x,t)|^{p+1} dx\, dt \leq \frac{p+1}{\kappa} E(t_0). 
\] 
 \item If $\kappa=0$, then $E(t)$ is a constant independent of $t$. 
\end{itemize}
\end{proposition}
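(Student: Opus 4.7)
The plan is to rigorously establish the differential identity
\[
E'(t) \;=\; -\frac{\kappa}{p+1}\int_{\Rm^3} e^{-\kappa t}\phi(x)\,|v(x,t)|^{p+1}\,dx
\]
for an arbitrary $\dot{H}^1\times L^2$ solution $v$ of \eqref{equ4} with $E(t_0)<\infty$, and then to read off both conclusions. When $\kappa=0$ the right-hand side vanishes, so $E$ is constant. When $\kappa>0$ the right-hand side is $\leq 0$, so $E$ is non-increasing; integrating the identity from $t_0$ to any $t<t_0+T_+$ and using $E(t)\geq 0$ yields
\[
\frac{\kappa}{p+1}\int_{t_0}^{t}\!\!\int_{\Rm^3} e^{-\kappa s}\phi(x)\,|v(x,s)|^{p+1}\,dx\,ds \;\leq\; E(t_0)-E(t) \;\leq\; E(t_0),
\]
and sending $t\to t_0+T_+$ via monotone convergence produces the stated integral bound.

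Only the differential identity itself needs real work, because the formal computation displayed in the excerpt uses integration by parts in space and differentiation under the integral sign, which a priori demand more regularity than an $\dot{H}^1\times L^2$ solution carries. The plan is a standard density argument. Pick approximating Cauchy data $(v_0^n,v_1^n)\in C_c^\infty(\Rm^3)\times C_c^\infty(\Rm^3)$ converging to $(v_0,v_1)$ in $\dot{H}^1\times L^2$ and chosen so that $\int\phi|v_0^n|^{p+1}\,dx \to \int\phi|v_0|^{p+1}\,dx$ (achievable by mollify-then-cutoff combined with dominated convergence, using the hypothesis $v_0\in L^{p+1}(\phi\,dx)$). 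For such smooth, compactly supported data the nonlinearity has enough regularity in $v$ (because $p\geq 3$) that classical wave-equation theory together with finite propagation speed produces solutions $v^n$ that are smooth and spatially compactly supported on every finite time interval. The formal calculation of the excerpt is then rigorous on $v^n$, giving $E_n'(t)=-\frac{\kappa}{p+1}\int e^{-\kappa t}\phi|v^n|^{p+1}\,dx$.

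It remains to pass to the limit on any $[t_0,t_0+T]\subset[t_0,t_0+T_+)$. By Theorem \ref{perturbation theory 2}, $v^n\to v$ in $C([t_0,t_0+T];\dot{H}^1\times L^2)\cap L^{2p/(p-3)}L^{2p}$; the kinetic and gradient pieces of $E_n$ then converge uniformly in $t$ to those of $E$, while the potential piece $\int\phi|v^n|^{p+1}\,dx$ converges uniformly in $t$ via $\dot{H}^1\hookrightarrow L^6$, H\"older, and $0\leq\phi\leq 1$, starting from convergence at $t=t_0$. The integrated form of the identity for $E_n'$ thus passes to the limit on every such subinterval, delivering the identity for $E'$ on the full interval $[t_0,t_0+T_+)$. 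The main technical point to watch is that $L^{p+1}(\phi\,dx)$ convergence of the initial datum is \emph{not} automatic from $\dot{H}^1$ convergence when $p+1<6$ (since $\phi$ may carry mass at infinity that $\dot{H}^1$ does not control), so the approximating sequence must be chosen with this weighted space explicitly in mind; once that initial convergence is in hand, Strichartz-based propagation handles the rest.
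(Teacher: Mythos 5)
Your proposal is correct and follows essentially the same route as the paper: the paper performs the identical formal computation of $E'(t)=-\frac{\kappa}{p+1}\int e^{-\kappa t}\phi|v|^{p+1}\,dx$, observes it is $\leq 0$ (and $\equiv 0$ when $\kappa=0$), integrates in time, and defers the rigorous justification for $\dot{H}^1\times L^2$ solutions to ``standard smooth approximation and cut-off techniques,'' which is precisely the density argument you spell out (and you are in fact more careful than the paper in flagging that convergence of the data in $L^{p+1}(\phi\,dx)$ must be arranged separately from $\dot{H}^1$ convergence). The one small overstatement is that, since $\phi$ is only assumed measurable, the approximate solutions $v^n$ cannot literally be taken smooth in $x$ without also regularizing $\phi$ --- harmless here because $\phi$ is never differentiated in the identity --- but this wrinkle is equally unaddressed in the paper's own sketch.
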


\subsection{Global behaviour in the positive time direction}

Assume that $v$ is a solution to the Cauchy problem \eqref{equ4} with a maximal lifespan $(t_0-T_-, t_0+T_+)$. Given any $t \in I_+ \doteq [t_0, t_0 +T_+)$, Proposition \ref{damping} implies 
\[
 \|(v(\cdot, t), v_t(\cdot, t))\|_{\dot{H}^1 \times L^2} \leq \left[2 E(t)\right]^{1/2} \leq \left[2E(t_0)\right]^{1/2}. 
\] 
According to Theorem \ref{local existence 2}, this means that there are two positive constants $T_1$ and $N_1$, such that if $t \in I_+$, then we have $[t, t + T_1] \subseteq I_+$ and $\|v\|_{L^{2p/(p-3)} L^{2p} ([t, t+T_1])} \leq N_1$.  It immediately follows that $T_+ = +\infty$. Namely the solution $u$ is defined for all time $t > t_0$. Furthermore, if $\kappa>0$ we have 
\begin{align*}
 \left\|G(x,t,v)\right\|_{L_t^1 L_x^2 ([t_0, \infty) \times \Rm^3)} = & \sum_{j=0}^\infty \left\|e^{-\kappa t}\phi(x)|v|^{p-1} v\right\|_{L^1 L^2 ([t_0+jT_1, t_0 + (j+1)T_1] \times \Rm^3)}\\
 = & \sum_{j=0}^\infty e^{-\kappa t_0 - j\kappa T_1} T_1^{(5-p)/2} \|v\|_{L^{2p/(p-3)} L^{2p} ([t_0+jT_1, t_0 + (j+1)T_1] \times \Rm^3)}^p\\
 = & \sum_{j=0}^\infty e^{-\kappa t_0 - j\kappa T_1} T_1^{(5-p)/2}  N_1^p < \infty.
\end{align*}
Recalling the Strichartz estimates and the fact that the linear wave propagation preserves the $\dot{H}^1 \times L^2$ norm, we obtain 
\begin{align*}
 & \lim_{t_1, t_2 \rightarrow +\infty} \left\|\mathbf{S}_L(-t_1) \begin{pmatrix} v(\cdot, t_1)\\ v_t(\cdot,t_1)\end{pmatrix} - \mathbf{S}_L(-t_2) \begin{pmatrix} v(\cdot, t_2)\\ v_t(\cdot,t_2)) \end{pmatrix} \right\|_{\dot{H}^1 \times L^2}\\
 = & \lim_{t_1, t_2 \rightarrow +\infty} \left\|\mathbf{S}_L(t_2 -t_1) \begin{pmatrix} v(\cdot, t_1)\\ v_t(\cdot,t_1)\end{pmatrix} - \begin{pmatrix} v(\cdot, t_2)\\ v_t(\cdot,t_2)) \end{pmatrix} \right\|_{\dot{H}^1 \times L^2}\\
 \leq & \lim_{t_1, t_2 \rightarrow +\infty} \|G(x,t,v)\|_{L_t^1 L_x^2 ([t_1, t_2]\times \Rm^3)} = 0.
\end{align*}
As a result, the pair $\mathbf{S}_L(-t) (v(\cdot,t), v_t(\cdot,t))$ converges in the space $\dot{H}^1 \times L^2$ as $t \rightarrow \infty$. Let us assume $\mathbf{S}_L(-t) (v(\cdot,t), v_t(\cdot,t)) \rightarrow (v_0^+, v_1^+)$. This is equivalent to saying
\[
 \lim_{t \rightarrow +\infty} \left\| (v(\cdot,t), v_t(\cdot,t)) - \mathbf{S}_L(t) (v_0^+, v_1^+) \right\|_{\dot{H}^1 \times L^2} = 0. 
\]
We summarize our results below 

\begin{theorem} [Global behaviour] \label{global behaviour}
 Let $v$ be a solution to the Cauchy problem \eqref{equ4} with a finite energy $E(t_0) < \infty$. Then $v$ is well-defined for all $t \geq t_0$. If we also have $\kappa>0$, then there exists a pair $(v_0^+, v_1^+) \in \dot{H}^1 \times L^2$ so that 
\[
 \lim_{t \rightarrow \infty} \left\|(v(\cdot,t), v_t(\cdot,t)) - \mathbf{S}_L (t) (v_0^+, v_1^+)\right\|_{\dot{H}^1 \times L^2} = 0. 
\]
\end{theorem}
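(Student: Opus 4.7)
The plan is to combine the energy monotonicity from Proposition \ref{damping} with the local theory of Theorem \ref{local existence 2} to propagate the solution forward for all time, and then exploit the exponential damping factor $e^{-\kappa t}$ to obtain scattering via a Strichartz/Cauchy argument.

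First I would extract a uniform $\dot{H}^1 \times L^2$ bound on $(v(\cdot,t), v_t(\cdot,t))$: because $\phi \geq 0$, Proposition \ref{damping} yields $\|(v(\cdot,t),v_t(\cdot,t))\|_{\dot{H}^1 \times L^2}^2 \leq 2 E(t) \leq 2 E(t_0)$ throughout the forward maximal lifespan $[t_0, t_0+T_+)$, regardless of whether $\kappa>0$ or $\kappa=0$. Plugging this into Theorem \ref{local existence 2} produces a uniform positive lower bound on the local existence time from any starting point $t^\star \geq t_0$ (the factor $e^{2\kappa t^\star/(5-p)}$ only grows with $t^\star$, so $T_1(t^\star) \geq T_1(t_0)$), together with a uniform Strichartz bound $\|v\|_{L^{2p/(p-3)}L^{2p}([t^\star, t^\star+T_1]\times \Rm^3)} \leq N_1$. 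Iterating in increments of $T_1$ rules out finite-time breakdown, so $T_+ = +\infty$ in both cases.

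For the scattering claim with $\kappa > 0$, I would verify the Cauchy criterion for $\mathbf{S}_L(-t)(v(\cdot,t), v_t(\cdot,t))$ in $\dot{H}^1 \times L^2$ as $t \to \infty$. Since the linear propagator is an isometry on $\dot{H}^1 \times L^2$, the Cauchy difference at times $t_1 < t_2$ equals $\|\mathbf{S}_L(t_2-t_1)(v(\cdot,t_1), v_t(\cdot,t_1)) - (v(\cdot,t_2), v_t(\cdot,t_2))\|_{\dot{H}^1 \times L^2}$, and Duhamel's formula combined with Proposition \ref{strichartz} (specialized to $(q_2, r_2) = (\infty, 2)$, $s' = 1$, $\rho_1=\rho_2=0$) controls this quantity by $\|G(\cdot,\cdot,v)\|_{L^1_t L^2_x([t_1,t_2] \times \Rm^3)}$, where $G(x,t,v) = -\phi(x) e^{-\kappa t}|v|^{p-1}v$.

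The main obstacle, and the step that genuinely needs $\kappa > 0$, is showing this nonlinear norm is globally finite with vanishing tail. I would partition $[t_0, \infty)$ into slabs $I_j = [t_0 + jT_1, t_0 + (j+1)T_1]$ and apply on each $I_j$ the H\"older-type estimate already displayed just before Theorem \ref{local existence 2}, yielding
\[
\|G(\cdot,\cdot,v)\|_{L^1 L^2(I_j \times \Rm^3)} \leq e^{-\kappa(t_0+jT_1)} T_1^{(5-p)/2} N_1^p,
\]
using $|\phi| \leq 1$ and the uniform local Strichartz bound from the first step. The geometric series $\sum_j e^{-\kappa j T_1}$ converges precisely because $\kappa > 0$, so $G \in L^1_t L^2_x([t_0, \infty) \times \Rm^3)$ and its tail vanishes as $t_1 \to \infty$. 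Setting $(v_0^+, v_1^+) := \lim_{t \to \infty} \mathbf{S}_L(-t)(v(\cdot,t), v_t(\cdot,t))$, which exists by the Cauchy property, then finishes the proof.
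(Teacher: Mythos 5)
Your proposal is correct and follows essentially the same route as the paper: uniform $\dot{H}^1\times L^2$ control from the energy monotonicity, iteration of the local theory in time slabs of fixed length $T_1$ to get global existence, and then summing the geometric series $\sum_j e^{-\kappa j T_1}$ to show $G\in L^1_tL^2_x$ and verify the Cauchy criterion for $\mathbf{S}_L(-t)(v(\cdot,t),v_t(\cdot,t))$. Your explicit remark that the local existence time from Theorem \ref{local existence 2} does not shrink as $t^\star$ increases (since $e^{2\kappa t^\star/(5-p)}$ is nondecreasing) is a detail the paper leaves implicit, but the argument is the same.
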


\noindent A combination of Theorem \ref{global behaviour} and Proposition \ref{damping} immediately gives

\begin{corollary} \label{global corollary} Let $v$ be a solution to the Cauchy problem \eqref{equ4} with $\kappa>0$ and a finite energy $E(t_0) < \infty$. Then we have 
\[
 \int_{t_0}^{\infty} \int_{\Rm^3} e^{-\kappa t} \phi(x) |v(x,t)|^{p+1} dx\, dt \leq \frac{p+1}{\kappa} E(t_0). 
\] 
\end{corollary}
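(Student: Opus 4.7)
The plan is essentially to invoke the two preceding results in sequence, since the corollary is just their marriage with a limit passage. First I would apply Theorem \ref{global behaviour} to the hypothesis that $v$ has finite energy $E(t_0)<\infty$ and $\kappa>0$: this immediately yields $T_+ = +\infty$, so the solution $v$ is defined on the entire half-line $[t_0,\infty)$ and the spacetime integral in the conclusion makes sense as an (a priori possibly infinite) improper integral.

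Next I would apply Proposition \ref{damping} on an arbitrary truncated interval $[t_0, t_0 + T]$ with $T<+\infty$. That proposition (in its form before we knew $T_+=\infty$) was stated on $[t_0, t_0+T_+)$; since now $T_+=\infty$ we may take any finite $T$ and obtain
\[
 \int_{t_0}^{t_0+T}\!\!\int_{\Rm^3} e^{-\kappa t}\phi(x)|v(x,t)|^{p+1}\,dx\,dt \;\leq\; \frac{p+1}{\kappa} E(t_0),
\]
with the right-hand side independent of $T$.

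Finally I would let $T\to +\infty$. The integrand is nonnegative (we have $\phi\geq 0$ and the exponential is positive), so the monotone convergence theorem transfers the uniform bound to the improper integral on $[t_0,\infty)$, producing exactly the claimed inequality. There is no real obstacle here; the only thing worth being careful about is confirming that Proposition \ref{damping} already gives the bound with a constant independent of the length of the interval, which it does since $E(t_0)$ depends only on the initial data. Hence the corollary follows with no additional work beyond citing Theorem \ref{global behaviour} for global existence and Proposition \ref{damping} combined with monotone convergence for the integral estimate.
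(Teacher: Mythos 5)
Your proposal is correct and is exactly the paper's argument: the paper derives the corollary by combining Theorem \ref{global behaviour} (which gives $T_+=+\infty$) with the integral bound of Proposition \ref{damping} on $[t_0,t_0+T_+)$. The truncation-plus-monotone-convergence step you add is harmless but not even needed, since once $T_+=\infty$ the proposition's bound already applies to the full half-line.
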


\subsection{A Morawetz-type Inequality}

\begin{proposition} \label{Morawetz}
Let $v$ be a solution to the Cauchy problem \eqref{equ4} in a time interval $[t_0, t_0 +T_+)$ so that 
\begin{itemize}
 \item [(I)] $E(t_0) < \infty$;
 \item [(II)] The inequalities $0 \leq \phi(x) \leq 1$ and $(p-1) \phi - x \cdot \nabla \phi \geq 0$ hold for all $x \in \Rm^3$.
\end{itemize}  
Then we have the following Morawetz-type inequality 
\[
 \int_{t_0}^{t_0 + T_+} \!\!\int_{\Rm^3}  e^{-\kappa t} \cdot  \frac{(p-1) \phi - x\cdot \nabla \phi}{|x|} \cdot |v|^{p+1}  dx\, dt \lesssim_1 E(t_0). 
\]
\end{proposition}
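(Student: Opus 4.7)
The plan is to implement the classical three-dimensional Morawetz multiplier method with multiplier $Mv := \partial_r v + v/|x|$, where $r = |x|$. I will multiply the equation in \eqref{equ4} by $Mv$, integrate over a slab $[t_0,T] \times \Rm^3$ for arbitrary $T \in (t_0, t_0 + T_+)$, and send $T \to t_0 + T_+$ at the end. Every integration by parts below is justified by the standard smooth-approximation and spatial cut-off procedure; I will not belabour these routine technicalities.

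For the nonlinear side, write $F(x,t) := -\phi(x)e^{-\kappa t}|v|^{p-1}v$ and use $|v|^{p-1}v \cdot \partial_r v = \partial_r|v|^{p+1}/(p+1)$. Integration by parts in $r$ against the volume element $r^2\, dr\, d\omega$ (which produces $\int \phi \partial_r g\, dx = -\int(\partial_r\phi + 2\phi/|x|)g\, dx$ once the boundary terms vanish), combined with the contribution $-\int \phi e^{-\kappa t}|v|^{p+1}/|x|\, dx$ coming from the $v/|x|$ half of the multiplier, produces the identity
\[
 \int_{\Rm^3} F \cdot Mv\, dx = -\frac{1}{p+1}\int_{\Rm^3} e^{-\kappa t} \cdot \frac{(p-1)\phi - x\cdot\nabla\phi}{|x|}\,|v|^{p+1}\,dx,
\]
whose right-hand side is pointwise non-positive by hypothesis (II). For the linear side, the time part collapses to a pure total derivative $\int v_{tt} Mv\, dx = \frac{d}{dt}\int v_t Mv\, dx$, because the two residual pieces $\int v_t \partial_r v_t\, dx$ and $-\int v_t^2/|x|\, dx$ cancel after one further integration by parts. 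The spatial piece $-\int \Delta v \cdot Mv\, dx$, after passing to spherical coordinates and integrating by parts both in $r$ and on $S^2$, reduces to the manifestly non-negative expression $\int |\nabla_\omega v|^2/|x|^3\, dx + 2\pi|v(0,t)|^2$, where $\nabla_\omega$ denotes the angular gradient. Integrating the resulting pointwise-in-$t$ identity over $[t_0,T]$ and rearranging therefore gives
\[
 \frac{1}{p+1}\int_{t_0}^T\!\!\int_{\Rm^3} e^{-\kappa t}\,\frac{(p-1)\phi - x\cdot\nabla\phi}{|x|}|v|^{p+1}\,dx\, dt \;=\; -\Bigl[\int_{\Rm^3} v_t Mv\, dx\Bigr]_{t_0}^{T} - (\textrm{non-negative terms}).
\]

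Dropping the non-negative terms and estimating the boundary-in-time term by Hardy's inequality $\|v/|x|\|_{L^2(\Rm^3)} \lesssim_1 \|\nabla v\|_{L^2}$, one obtains $\|Mv\|_{L^2} \lesssim_1 \|\nabla v\|_{L^2}$, hence $|\int v_t Mv\, dx| \lesssim_1 \|v_t\|_{L^2}\|\nabla v\|_{L^2} \lesssim_1 E(t)$. Since Proposition \ref{damping} gives $E(t) \leq E(t_0)$ throughout $[t_0, t_0+T_+)$ and $p+1 \leq 6$, letting $T \to t_0 + T_+$ and applying monotone convergence delivers the desired estimate with an absolute constant. The main technical obstacle is the rigorous justification of all the integration by parts for merely $\dot H^1 \times L^2$ solutions; this is handled in the usual way by smoothing, together with the local Strichartz bounds recorded in Remark \ref{d12L4L4}, which guarantee that all the spacetime integrals above are a priori finite over bounded subintervals.
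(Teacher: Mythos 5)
Your proposal is correct and follows essentially the same route as the paper: the paper's Morawetz functional $M(t)=\int_{\Rm^3} v_t\left(\nabla v\cdot\nabla a+\tfrac12 v\,\Delta a\right)dx$ with $a(x)=|x|$ is exactly your $\int v_t\,Mv\,dx$ with $Mv=\partial_r v+v/|x|$, and both arguments identify the same nonlinear contribution, discard the same signed linear terms (your explicit $\int|\nabla_\omega v|^2/|x|^3\,dx+2\pi|v(0,t)|^2$ is the paper's $\mathbf{D}^2a\geq 0$, $\Delta\Delta a\leq 0$), and close with Hardy's inequality plus the monotonicity of $E(t)$ from Proposition \ref{damping}. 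No substantive differences to report.
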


\paragraph{Outline of the proof} Let us consider a function $a(x)=|x|$ and define 
\[
 M(t) = \int_{\Rm^3} v_t(x,t) \left(\nabla v(x,t) \cdot \nabla a(x) + \frac{1}{2} \Delta a(x) v(x,t)\right)dx. 
\]
A basic calculation shows 
\begin{align*}
 &\nabla a = \frac{x}{|x|},& &\Delta a = \frac{2}{|x|},& &\mathbf{D}^2 a \geq 0,& &\Delta \Delta a \leq 0.&
\end{align*}
As a result, we obtain an upper bound on $|M(t)|$ by Hardy's inequality $\|v/|x|\|_{L^2} \lesssim \|\nabla v\|_{L^2}$:  
\begin{equation} \label{upper bound of M}
 \left|M(t)\right| \leq  \|v_t(\cdot, t)\|_{L^2} \left(\|\nabla v(\cdot,t)\|_{L^2} + \|v(x,t)/|x|\|_{L_x^2(\Rm^3)}\right) \lesssim_1 E(t).
\end{equation}
Next we calculate the derivative $M'(t)$ informally 
\begin{align*}
 M'(t)  = & \int_{\Rm^3} v_{tt} \left(\nabla v \cdot \nabla a + \frac{1}{2} v \Delta a\right)dx + \int_{\Rm^3} v_t \left(\nabla v_t \cdot \nabla a + \frac{1}{2} v_t \Delta a\right)dx\\
 = & \int_{\Rm^3} \Delta v \left(\nabla v \cdot \nabla a + \frac{1}{2} v \Delta a\right)dx - \int_{\Rm^3} \phi(x) e^{-\kappa t} |v|^{p-1} v \left(\nabla v \cdot \nabla a + \frac{1}{2} v \Delta a\right) dx\\
 &\qquad  +  \int_{\Rm^3} v_t \left(\nabla v_t \cdot \nabla a + \frac{1}{2} v_t \Delta a\right)dx\\
 = & I_1 + I_2 + I_3. 
\end{align*}
Let us start with $I_1$. For simplicity we use lower indices to represent partial derivatives. 
\begin{align*}
 I_1 = & \int_{\Rm^3} \left(\sum_{i,j=1}^3 v_{ii} v_j a_j\right) dx - \frac{1}{2} \int_{\Rm^3} |\nabla v|^2 \Delta a \,dx  - \frac{1}{2} \int_{\Rm^3} v \nabla v\cdot  \nabla \Delta a\, dx\\
 = & - \int_{\Rm^3} \left(\sum_{i,j=1}^3 a_{ij} v_i v_j\right) dx - \int_{\Rm^3} \left(\sum_{i,j=1}^3 a_{j} v_i v_{ij}\right) dx - \frac{1}{2} \int_{\Rm^3} |\nabla v|^2 \Delta a\, dx + \frac{1}{4} \int_{\Rm^3} |v|^2 \Delta \Delta a\, dx\\
 \leq & - \frac{1}{2} \int_{\Rm^3} \nabla a \cdot \nabla (|\nabla v|^2)dx - \frac{1}{2} \int_{\Rm^3} |\nabla v|^2 \Delta a\, dx\\
 = & 0. 
\end{align*}
Here we use the facts $\mathbf{D}^2 a \geq 0$ and $\Delta \Delta a \leq 0$. In addition we have 
\begin{align*}
 I_2 = & - \frac{1}{p+1} \int_{\Rm^3} \phi(x) e^{-\kappa t} \nabla (|v|^{p+1}) \cdot \nabla a\, dx 
 -\frac{1}{2} \int_{\Rm^3} \phi(x) e^{-\kappa t} |v|^{p+1} \Delta a\, dx\\
 = & \frac{1}{p+1} \int_{\Rm^3}  e^{-\kappa t} |v|^{p+1} \nabla \phi \cdot \nabla a dx + \left( \frac{1}{p+1} -\frac{1}{2}\right) \int_{\Rm^3}  e^{-\kappa t} |v|^{p+1} \phi \Delta a\, dx \\
 = & \frac{1}{p+1} \int_{\Rm^3}  e^{-\kappa t} |v|^{p+1} \left(\nabla \phi \cdot \nabla a - \frac{p-1}{2} \phi \Delta a \right)dx\\
 = & \frac{-1}{p+1} \int_{t_1}^{t_2} \!\!\int_{\Rm^3}  e^{-\kappa t} \cdot  \frac{(p-1) \phi - x\cdot \nabla \phi}{|x|} \cdot |v|^{p+1}  dx\, dt.
\end{align*}
Finally 
\begin{equation*}
 I_3 =  \frac{1}{2} \int_{\Rm^3} \nabla (|\partial_t v|^2) \cdot \nabla a\, dx + \frac{1}{2} \int_{\Rm^3} |\partial_t v|^2 \Delta a\, dx = 0. 
 \end{equation*}
Now we collect all the terms above and then integrate from $t=t_1$ to $t=t_2$: 
\begin{align*}
 M(t_2) - M(t_1)  \leq & \frac{-1}{p+1} \int_{t_1}^{t_2} \!\!\int_{\Rm^3}  e^{-\kappa t} \cdot  \frac{(p-1) \phi - x\cdot \nabla \phi}{|x|} \cdot |v|^{p+1}  dx\, dt . 
\end{align*}
We plug the upper bound on $|M(t)|$ as given in \eqref{upper bound of M} into the left hand side above, recall the monotonicity of $E(t)$ and finally complete our proof. 
\begin{remark}
 The argument above works only for solutions $v$ that satisfies certain regularity conditions. However, Proposition \ref{Morawetz} still holds for all solutions $v$ with a finite energy $E(t_0) < \infty$. This can be proved via standard smooth approximation and cut-off techniques. Please refer to Section 4 of \cite{subhyper} for more details about this type of argument. 
\end{remark}

\subsection{An Equivalent Condition of Scattering} \label{sec: equivalent condition of scattering}

Let us start by a technical result. 

\begin{proposition} \label{low index regularity}
Let $v$ be a solution to the Cauchy problem \eqref{equ4} in a bounded closed time interval $I = [a,b]$ with initial data $(v_0,v_1) \in (\dot{H}^1 \cap \dot{H}^{s_p}) \times (L^2 \cap \dot{H}^{s_p-1})$. Then we have $(v(\cdot,t), v_t(\cdot,t)) \in C(I; \dot{H}^{s_p} \times \dot{H}^{s_p-1})$ and 
\[ 
   \|D^{s_p-1/2} v\|_{L^4 L^4 ([a,b] \times \Rm^3)} < +\infty.
\]
\end{proposition}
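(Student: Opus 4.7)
The plan is to prove persistence of the extra $\dot{H}^{s_p}$ regularity via a Strichartz-based bootstrap over short subintervals of $[a,b]$. My starting point is that $v$ is already a solution of \eqref{equ4} at the $\dot{H}^1 \times L^2$ level, so by the definition of a solution together with Remark~\ref{d12L4L4} and Proposition~\ref{damping}, the norms $\|v\|_{L^{2p/(p-3)} L^{2p}([a,b] \times \Rm^3)}$, $\|D^{1/2} v\|_{L^4 L^4([a,b] \times \Rm^3)}$, and $\|(v,v_t)\|_{L^\infty([a,b]; \dot{H}^1 \times L^2)}$ are all finite. I then partition $[a,b]$ into finitely many subintervals $I_j=[t_j,t_{j+1}]$ on which $\|v\|_{L^{2p/(p-3)} L^{2p}(I_j \times \Rm^3)} < \eta$ for a small $\eta>0$ to be fixed later.

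On a single $I_j$ I apply the Strichartz estimate (Proposition~\ref{strichartz}) at regularity level $s'=s_p$, with $(q_1,r_1)=(4,4)$ and $\rho_1 = s_p-1/2$ on the homogeneous side (whose scaling identity $\tfrac14+\tfrac34 = \tfrac32 - s_p + \rho_1$ holds), and a compatible dual pair on the inhomogeneous side. This yields
\begin{align*}
&\|D^{s_p-1/2}v\|_{L^4 L^4(I_j)} + \|(v,v_t)\|_{C(I_j;\,\dot{H}^{s_p}\times\dot{H}^{s_p-1})} \\
&\qquad \lesssim_p \|(v,v_t)(\cdot,t_j)\|_{\dot{H}^{s_p}\times\dot{H}^{s_p-1}} + \bigl\|D^{s_p-1/2}\bigl(\phi\,e^{-\kappa t}|v|^{p-1}v\bigr)\bigr\|_{L^{4/3} L^{4/3}(I_j)}.
\end{align*}
The Kato--Ponce fractional Leibniz rule then gives
\[
\bigl\|D^{s_p-1/2}(|v|^{p-1}v)\bigr\|_{L^{4/3} L^{4/3}(I_j)} \lesssim_p \|v\|_{L^{q_0} L^{r_0}(I_j)}^{p-1}\,\|D^{s_p-1/2}v\|_{L^4 L^4(I_j)},
\]
for an appropriate Strichartz pair $(q_0,r_0)$ (the natural choice being $(q_0,r_0)=(2(p-1),2(p-1))$, so that the Hölder exponents add up). I control the factor $\|v\|_{L^{q_0}L^{r_0}(I_j)}^{p-1}$ by interpolating the already-bounded $\dot{H}^1$-level Strichartz norms $\|v\|_{L^{2p/(p-3)} L^{2p}(I_j)} < \eta$, $\|v\|_{L^4 L^{12}(I_j)}$ (obtained from $\|D^{1/2}v\|_{L^4 L^4(I_j)}$ and the Sobolev embedding $\dot{W}^{1/2,4}(\Rm^3)\hookrightarrow L^{12}(\Rm^3)$), and $\|v\|_{L^\infty L^6(I_j)}$ (from $\|\nabla v\|_{L^\infty L^2}$). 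Choosing $\eta$ small enough makes this factor $\leq \tfrac12 C_p^{-1}$, which lets me absorb the nonlinear term into the left-hand side.

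Absorption gives $\|D^{s_p-1/2}v\|_{L^4 L^4(I_j)} + \|(v,v_t)\|_{C(I_j;\,\dot{H}^{s_p}\times\dot{H}^{s_p-1})} \lesssim_p \|(v,v_t)(\cdot,t_j)\|_{\dot{H}^{s_p}\times\dot{H}^{s_p-1}}$. Iterating over the finitely many subintervals propagates the $\dot{H}^{s_p}\times\dot{H}^{s_p-1}$ bound across $[a,b]$ and yields a finite $L^4 L^4$ bound on $D^{s_p-1/2}v$. Time-continuity of $(v,v_t)$ in $\dot{H}^{s_p}\times\dot{H}^{s_p-1}$ is a standard consequence of the Strichartz estimate, since the nonlinear forcing lies in the corresponding dual Strichartz space on each $I_j$.

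The main obstacle is the nonlinear estimate of the third step above: selecting $(q_0,r_0)$ and an interpolation scheme so that (i) the fractional Leibniz inequality applies with Hölder indices summing correctly and (ii) the factor $\|v\|_{L^{q_0}L^{r_0}(I_j)}^{p-1}$ is genuinely small on short intervals using only the $\dot{H}^1$-level Strichartz norms of $v$. Verifying (ii) by interpolation is particularly delicate near the endpoints $p=3$ (where $L^{2p/(p-3)}L^{2p}$ degenerates into $L^\infty L^6$, so the smallness has to come from a Hölder bound in time over the short interval) and $p$ close to $5$ (where the subcritical exponents become tight and one must use the $L^4 L^{12}$ pair to reach the needed spatial integrability).
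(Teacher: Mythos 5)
Your overall strategy (Strichartz at the $\dot H^{s_p}$ level plus a fractional chain rule and absorption on short subintervals) is the standard persistence-of-regularity argument, but as written it has two genuine gaps, and the paper deliberately takes a different route that avoids exactly these. First, the nonlinearity in \eqref{equ4} is $\phi(x)e^{-\kappa t}|v|^{p-1}v$ with $\phi$ only assumed to be a bounded \emph{measurable} function; you silently dropped $\phi$ when you wrote the bound for $\|D^{s_p-1/2}(|v|^{p-1}v)\|_{L^{4/3}L^{4/3}}$. The Kato--Ponce/fractional chain rule does not commute with multiplication by a rough bounded coefficient: $\|D^{s}(\phi g)\|$ is not controlled by $\|\phi\|_{L^\infty}\|D^{s}g\|$, so your key nonlinear estimate is unavailable at the stated level of generality. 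Second, your scheme needs $\|v\|_{L^{2(p-1)}L^{2(p-1)}(I_j\times\Rm^3)}$ to be finite and small using only $\dot H^1$-level information, but every norm you propose to interpolate ($L^\infty L^6$, $L^4L^{12}$, $L^{2p/(p-3)}L^{2p}$) has spatial integrability at least $6$, and neither interpolation nor H\"older in time can lower the spatial exponent below $6$; since $2(p-1)<6$ for $p\in[3,4)$, the factor you want to make small is not even known to be finite there without already invoking the $\dot H^{s_p}$-level bound you are trying to prove. You flag this endpoint as ``delicate,'' but it is where the argument actually breaks rather than a technicality to be checked.

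The paper's proof avoids both problems by never differentiating the nonlinearity: it applies Proposition \ref{strichartz} with $\rho_2$ chosen so that the dual norm is $\|(\partial_t^2-\Delta)v\|_{L^{2/(1+s_p)}L^{2/(2-s_p)}}$ with no derivative on $F$, and then bounds $\|\phi^{1/p}v\|_{L^{4p(p-1)/(5p-9)}L^{4p(p-1)/(p+3)}}$ by interpolating the energy bound $\|\phi^{1/p}v\|_{L^\infty L^{p+1}}<\infty$ from Proposition \ref{damping} (here the rough weight $\phi$ is harmless, indeed helpful) with the $\dot H^1$-level Strichartz norm $\|v\|_{L^5L^{10}}$, finishing with H\"older in time on the bounded interval $I$. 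No absorption, no smallness, and no regularity of $\phi$ is needed. If you wish to keep your bootstrap, you would at minimum have to restrict to $\phi$ with controlled derivatives and replace the $L^{2(p-1)}L^{2(p-1)}$ factor by quantities reachable from the energy and the $\dot H^1$-level Strichartz norms, at which point you have essentially reconstructed the paper's estimate.
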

\begin{proof}
Let us recall the Strichartz estimate
\begin{align*}
 \|(v(\cdot,t), v_t(\cdot,t))\|_{C(I; \dot{H}^{s_p} \times \dot{H}^{s_p-1})} & + \|D^{s_p-1/2} v\|_{L^4 L^4 ([a,b] \times \Rm^3)} \\
 & \lesssim \|(v_0,v_1)\|_{\dot{H}^{s_p} \times \dot{H}^{s_p-1}} + \|(\partial_t^2-\Delta) v\|_{L^{\frac{2}{1+s_p}} L^{\frac{2}{2-s_p}} (I \times \Rm^3)}.
\end{align*}
As a result, it suffices to show 
\begin{equation} \label{to prove 1}
  \left\|-e^{-\kappa t} \phi(x) |v|^{p-1} v\right\|_{L^{\frac{2}{1+s_p}} L^{\frac{2}{2-s_p}} (I \times \Rm^3)} < \infty \Leftrightarrow 
  \left\|\phi^{1/p} v\right\|_{L^{\frac{4(p-1)p}{5p-9}} L^{\frac{4(p-1)p}{p+3}} (I \times \Rm^3)} < \infty. 
\end{equation}
On one hand, the monotonicity of $E(t)$ implies 
\[
 \sup_{t \in I} \int_{\Rm^3} e^{-\kappa t} \phi(x) |v(x,t)|^{p+1} dx\, dt < \infty \Rightarrow \left\|\phi^{1/p} v\right\|_{L^\infty L^{p+1} (I \times \Rm^3)} < \infty. 
\]
On the other hand, the Strichartz estimates give
\[
 \|v\|_{L^5 L^{10} (I \times \Rm^3)} < \infty \Longrightarrow \left\|\phi^{1/p} v\right\|_{L^5 L^{10} (I \times \Rm^3)} < \infty. 
\]
We combine these two inequalities via an interpolation (with ratio $(5-p)(2p+3)(p+1):5(p-3)(3p+1)$) to obtain 
\[
 \left\|\phi^{1/p} v\right\|_{L^\frac{2p(p-1)(9-p)}{(p-3)(3p+1)} L^\frac{4(p-1)p}{p+3} (I \times \Rm^3)} < +\infty.
\]
This is a sufficient condition of \eqref{to prove 1} because $I$ is a finite interval and $\frac{2p(p-1)(9-p)}{(p-3)(3p+1)} \geq \frac{4(p-1)p}{5p-9}$.
\end{proof}

\begin{proposition}[Scattering with a finite $L^{2(p-1)} L^{2(p-1)}$ norm] \label{L2p2}
 Let $u$ be a solution to (CP1) with initial data $(u_0,u_1) \in (\dot{H}^1 \cap \dot{H}^{s_p}) \times (L^2 \cap \dot{H}^{s_p-1})$. If $\|u\|_{L^{2(p-1)} L^{2(p-1)}(\Rm \times \Rm^3)} < \infty$, then $u$ scatters in both two time directions. More precisely, there exist two pairs $(u_0^\pm, u_1^\pm) \in (\dot{H}^1 \cap \dot{H}^{s_p}) \times (L^2 \cap \dot{H}^{s_p-1})$, so that the following limit holds for each $s' \in [s_p,1]$
\[
 \lim_{t \rightarrow \pm \infty} \left\|(u(\cdot,t), u_t(\cdot, t)) - \mathbf{S}_L(t) (u_0^\pm, u_1^\pm) \right\|_{\dot{H}^{s'} \times \dot{H}^{s'-1} (\Rm^3)} = 0. 
\]
\end{proposition}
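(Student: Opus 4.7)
The plan is to run the standard persistence-of-regularity/stability argument. Since $\|u\|_{L^{2(p-1)}L^{2(p-1)}(\Rm\times \Rm^3)}$ is finite, I first partition $\Rm$ into finitely many intervals $I_j = [t_j, t_{j+1}]$, $j = 0, 1, \dots, N-1$, on each of which $\|u\|_{L^{2(p-1)}L^{2(p-1)}(I_j \times \Rm^3)} < \eta$, where the small constant $\eta = \eta(p)$ will be determined below. For a fixed $s' \in [s_p, 1]$ I apply the Strichartz estimate of Proposition \ref{strichartz} with $(q_1, r_1) = (q_2, r_2) = (4, 4)$ and $\rho_1 = -\rho_2 = s' - 1/2 \in [0, 1/2]$; this is admissible since $s_p \geq 1/2$ for $p \geq 3$. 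On $I_j$ this reads
\[
 \|(u, u_t)\|_{C(I_j; \dot H^{s'} \times \dot H^{s'-1})} + \|D_x^{s'-1/2} u\|_{L^4 L^4 (I_j \times \Rm^3)} \leq C_p \|(u, u_t)(t_j)\|_{\dot H^{s'} \times \dot H^{s'-1}} + C_p \|D_x^{s'-1/2}(|u|^{p-1} u)\|_{L^{4/3} L^{4/3} (I_j \times \Rm^3)}.
\]

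Next the nonlinear term is estimated by combining the Christ--Weinstein fractional chain rule with H\"older's inequality:
\[
 \|D_x^{s'-1/2}(|u|^{p-1}u)\|_{L^{4/3}L^{4/3}(I_j)} \lesssim_{p} \||u|^{p-1}\|_{L^2 L^2(I_j)} \|D_x^{s'-1/2} u\|_{L^4 L^4(I_j)} = \|u\|_{L^{2(p-1)} L^{2(p-1)}(I_j)}^{p-1} \|D_x^{s'-1/2} u\|_{L^4 L^4(I_j)}.
\]
Choosing $\eta$ so that $\eta^{p-1}$ times the implicit Strichartz/chain-rule constant is $\leq 1/2$, I absorb the rightmost factor into the left hand side of the Strichartz inequality, obtaining on each $I_j$
\[
 \|(u,u_t)\|_{C(I_j; \dot H^{s'} \times \dot H^{s'-1})} + \|D_x^{s'-1/2} u\|_{L^4 L^4(I_j)} \lesssim_{p} \|(u, u_t)(t_j)\|_{\dot H^{s'} \times \dot H^{s'-1}}.
\]
Iterating across the $N$ intervals propagates the $\dot H^{s'} \times \dot H^{s'-1}$ norm of $(u,u_t)$ through time, and summing yields the global bound $\|D_x^{s'-1/2} u\|_{L^4 L^4(\Rm \times \Rm^3)} < \infty$, together with uniform $L^\infty \dot H^{s'}$ control; that the initial data for each iteration step indeed lie in $\dot H^{s'} \times \dot H^{s'-1}$ is supplied by Proposition \ref{low index regularity} (for $s' = s_p$) together with the hypothesis and energy conservation (for $s' = 1$), and by interpolation for intermediate $s'$.

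Scattering at the level $s'$ then follows from a Cauchy argument. For $t_1 < t_2$ the inhomogeneous Strichartz estimate gives
\[
 \left\|\mathbf{S}_L(-t_2) (u, u_t)(t_2) - \mathbf{S}_L(-t_1) (u, u_t)(t_1)\right\|_{\dot H^{s'} \times \dot H^{s'-1}} \lesssim_{p} \|u\|_{L^{2(p-1)} L^{2(p-1)}([t_1, t_2])}^{p-1} \|D_x^{s'-1/2} u\|_{L^4 L^4([t_1, t_2])},
\]
which tends to $0$ as $t_1, t_2 \to +\infty$ by the global finiteness of both factors. Completeness of $\dot H^{s'} \times \dot H^{s'-1}$ yields the limit $(u_0^+, u_1^+)$, hence scattering in the positive time direction; the case $t \to -\infty$ is identical.

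The main obstacle is verifying the fractional Leibniz/chain rule step uniformly in $s' \in [s_p, 1]$. Since $p \geq 3$ makes $z \mapsto |z|^{p-1} z$ at least $C^2$ in $z$, and $s' - 1/2 \in [s_p - 1/2, 1/2]$ lies in a compact range strictly below $1$, the Christ--Weinstein chain rule applies with constants depending only on $p$ and continuously on $s'$, so the implied constants above may be chosen uniformly. All remaining steps are routine applications of Strichartz and absorption.
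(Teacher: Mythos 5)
Your proposal is correct and follows essentially the same route as the paper: Strichartz at regularity $s'-1/2$ in $L^4L^4$, the fractional chain rule, absorption via smallness of the $L^{2(p-1)}L^{2(p-1)}$ norm, and a Cauchy-sequence argument for $\mathbf{S}_L(-t)(u,u_t)$; the only cosmetic differences are that you partition all of $\Rm$ into finitely many small-norm intervals rather than choosing a single large starting time, and you run the argument for every $s'\in[s_p,1]$ rather than for the endpoints followed by interpolation. One small point to make explicit: the absorption step needs the \emph{a priori finiteness} of $\|D_x^{s'-1/2}u\|_{L^4L^4(I_j\times\Rm^3)}$ on each (bounded truncation of an) interval, not merely membership of the data in $\dot H^{s'}\times\dot H^{s'-1}$ — this is exactly what Remark \ref{d12L4L4} and Proposition \ref{low index regularity} supply, and on the two unbounded end intervals one should work on $[t_{N-1},b]$ with a bound uniform in $b$ before letting $b\to\infty$.
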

\begin{proof}
 Since the equation is time-invertible, it suffices to consider the case $t \rightarrow +\infty$. In the argument below, we temporarily assume that $s'$ is either $1$ or $s_p$.  We start by picking up an arbitrary finite time interval $[a,b]$ and applying the Strichartz estimates 
 \begin{align*}
  &\|D_x^{s'-1/2} u\|_{L^{4} L^{4}([a,b]\times \Rm^3)}\\
   &\quad \leq C \|(u(\cdot,a), u_t(\cdot,a))\|_{\dot{H}^{s'} \times \dot{H}^{s'-1}} + C \|D_x^{s'-1/2} (-|u|^{p-1}u)\|_{L^{4/3} L^{4/3} ([a,b]\times \Rm^3)}\\
  &\quad \leq C \|(u(\cdot,a), u_t(\cdot,a))\|_{\dot{H}^{s'} \times \dot{H}^{s'-1}} + C_{s',p} \|u\|_{L^{2(p-1)} L^{2(p-1)}([a,b] \times \Rm^3)}^{p-1} \|D_x^{s'-1/2} u\|_{L^{4} L^{4}([a,b]\times \Rm^3)}.
 \end{align*}
 In the last step above, we apply the chain rule with fractional derivatives. Please see Lemma 2.5 of \cite{kenig1} and the citation therein for more details. By the assumption $\|u\|_{L^{2(p-1)} L^{2(p-1)}(\Rm\times \Rm^3)} < \infty$, we can fix a large number $a$, so that $C_{s',p} \|u\|_{L^{2(p-1)} L^{2(p-1)}([a,\infty)\times \Rm^3)}^{p-1} < 1/2$. We plug this upper bound into the inequality above, recall the fact $\|D_x^{s'-1/2} u\|_{L^{4} L^{4}([a,b]\times \Rm^3)} < \infty$
 that comes from either Remark \ref{d12L4L4}, if $s'=1$, or Proposition \ref{low index regularity}, if $s'=s_p$, and obtain 
 \[
  \|D_x^{s'-1/2} u\|_{L^{4} L^{4}([a,b]\times \Rm^3)} < 2 C \|(u(\cdot,a), u_t(\cdot,a))\|_{\dot{H}^{s'} \times \dot{H}^{s'-1}} < \infty.
 \]
Here the finiteness of $\dot{H}^{s'}\times \dot{H}^{s'-1}$norm comes from either the definition of a solution, if $s'=1$, or Proposition \ref{low index regularity}, if $s'=s_p$. Please note that the upper bound here does not depend on the right endpoint $b$. A combination of this uniform upper bound with the fact that $\mathbf{S}_L(t)$ preserves the $\dot{H}^{s'} \times \dot{H}^{s'-1}$ norm implies
 \begin{align*}
 &  \limsup_{t_1, t_2 \rightarrow +\infty} \left\|\mathbf{S}_{L}(-t_2) \begin{pmatrix} u(\cdot, t_2)\\ u_t(\cdot,t_2)\end{pmatrix} - \mathbf{S}_L (-t_1) \begin{pmatrix} u(\cdot,t_1)\\ u_t(\cdot, t_1)\end{pmatrix}\right\|_{\dot{H}^{s'} \times \dot{H}^{s'-1}} \\
  = & \limsup_{t_1, t_2 \rightarrow +\infty} \left\|\begin{pmatrix} u(\cdot, t_2)\\ u_t(\cdot,t_2)\end{pmatrix} - \mathbf{S}_L (t_2-t_1) \begin{pmatrix} u(\cdot,t_1)\\ u_t(\cdot, t_1)\end{pmatrix}\right\|_{\dot{H}^{s'} \times \dot{H}^{s'-1}} \\
  \leq & C  \limsup_{t_1, t_2 \rightarrow +\infty}  \|D_x^{s'-1/2} (-|u|^{p-1}u)\|_{L^{4/3} L^{4/3} ([t_1,t_2]\times \Rm^3)}\\
  \leq & C_{s',p}   \limsup_{t_1, t_2 \rightarrow +\infty}  \left(\|u\|_{L^{2(p-1)} L^{2(p-1)}([t_1,t_2] \times \Rm^3)}^{p-1} \|D_x^{s'-1/2} u\|_{L^{4} L^{4}([t_1,t_2]\times \Rm^3)}\right) = 0.
 \end{align*}
As a result,  the pair $\mathbf{S}_L (-t) (u(\cdot, t), u_t(\cdot,t))$ converges in the space $\dot{H}^{s'} \times \dot{H}^{s'-1} (\Rm^3)$ as $t \rightarrow +\infty$. Since the argument above works for both $s'=1$ and $s'=s_p$, we know that there exists a pair $(u_0^+, u_1^+) \in (\dot{H}^1 \cap \dot{H}^{s_p}) \times (L^2 \cap \dot{H}^{s_p-1})$ so that the limit 
 \[
  \lim_{t \rightarrow +\infty} \left\|\mathbf{S}_L (-t) (u(\cdot, t), u_t(\cdot,t)) - (u_0^+, u_1^+)\right\|_{\dot{H}^{s'} \times \dot{H}^{s'-1} (\Rm^3)} = 0
 \]
holds for $s' \in \{1, s_p\}$. By a basic interpolation the limit above holds for all $s' \in [s_p,1]$. This is equivalent to our conclusion
 \[
  \lim_{t \rightarrow +\infty} \left\|(u(\cdot, t), u_t(\cdot,t)) - \mathbf{S}_L (t)  (u_0^+, u_1^+)\right\|_{\dot{H}^{s'} \times \dot{H}^{s'-1} (\Rm^3)} = 0. 
 \]
 \end{proof}

\section{Preliminary Estimates on Solutions}

\begin{lemma} \label{lm1} (See also Lemma 6.12 of \cite{subhyper} for the 2D version) Let $u$ be a solution to the linear wave equation
\[
 \left\{\begin{array}{l} \partial_t^2 u - \Delta u = F(x,t), \,\,\,\, (x,t)\in \Rm^3 \times [0,T];\\
u |_{t=0} = u_0; \\
\partial_t u |_{t=0} = u_1;\end{array}\right.
\]
with radial data $u_0, u_1$ and $F$. These data satisfy the inequalities
\begin{align*}
 &|u_0(x)| \leq A_1 |x|^{-1-\alpha}, \quad |F(x,t)| \leq B_1 |x|^{-3} (|x|-t)^{-\beta},& & \hbox{if}\; |x| > R;&\\
 &\int_{|x|>R} |x|^{1+2\alpha} |u_1(x)|^2 dx \leq A_1^2;& &&
\end{align*}
with constants $R, A_1, B_1> 0$ and  $0 < \alpha, \beta < 1/2$. Then there exists a constant $C = C (\alpha, \beta)\geq 1$ such that the solution $u$ satisfies
\[
 |u(x,t)| \leq C |x|^{-1}\left[A_1 (|x|-t)^{-\alpha} +  B_1 (|x|-t)^{-\beta} \right], \; \hbox{if}\; t\in [0,T]\;\hbox{and}\; |x| > R + t.
\]
\end{lemma}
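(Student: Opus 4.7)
The natural approach is to reduce the 3D radial wave equation to a 1D problem on the half-line via the substitution $w(r,t) := r u(r,t)$, with $w_0 := r u_0$, $w_1 := r u_1$, and $g(r,t) := r F(r,t)$. A direct calculation using $\Delta = \partial_r^2 + \tfrac{2}{r}\partial_r$ on radial functions gives $w_{tt} - w_{rr} = g$ on $(0,\infty) \times [0,T]$, with Dirichlet boundary data at $r = 0$. Since the target region $r > R + t$ lies strictly inside the set where $r > t$, the backward characteristic cone from $(r,t)$ avoids the axis $r = 0$, and d'Alembert's formula applies directly:
\[
w(r,t) = \tfrac{1}{2}\bigl[w_0(r+t) + w_0(r-t)\bigr] + \tfrac{1}{2}\int_{r-t}^{r+t}\! w_1(\rho)\,d\rho + \tfrac{1}{2}\int_0^t\!\int_{r-(t-s)}^{r+(t-s)}\! g(\rho,s)\,d\rho\,ds.
\]
The conclusion $|u| \leq Cr^{-1}[A_1(r-t)^{-\alpha} + B_1(r-t)^{-\beta}]$ is, after multiplication by $r$, equivalent to $|w(r,t)| \leq C[A_1(r-t)^{-\alpha} + B_1(r-t)^{-\beta}]$, so the plan is to bound each term on the right in this form.

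For the two initial-data terms, I would simply invoke the pointwise bound on $u_0$: since $r \pm t > R$, $|w_0(r \pm t)| = (r \pm t)|u_0(r \pm t)| \leq A_1 (r \pm t)^{-\alpha} \leq A_1 (r-t)^{-\alpha}$. For the velocity term $\tfrac{1}{2}\int_{r-t}^{r+t} \rho\, u_1(\rho)\, d\rho$, I would apply Cauchy--Schwarz with the weight $\rho^{1+2\alpha}$. The hypothesis on $u_1$, rewritten in polar coordinates as $4\pi\int_R^\infty \rho^{3+2\alpha}|u_1(\rho)|^2\, d\rho \leq A_1^2$, controls the weighted factor (using $r-t > R$), while the dual factor $\int_{r-t}^{r+t}\rho^{-1-2\alpha}\, d\rho \lesssim_\alpha (r-t)^{-2\alpha}$ supplies the decay. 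Together these produce a contribution of size $\lesssim_\alpha A_1(r-t)^{-\alpha}$, handling the first half of the claimed bound.

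The delicate piece is the Duhamel integral $\tfrac{1}{2}\int_0^t\int_{r-(t-s)}^{r+(t-s)} \rho F(\rho,s)\, d\rho\, ds$. The pointwise bound $|\rho F(\rho,s)| \leq B_1 \rho^{-2}(\rho - s)^{-\beta}$ applies throughout the domain since $\rho \geq r-(t-s) > R$ and $\rho - s \geq r - t > 0$. To avoid losing a spurious factor of $t/(r-t)$, I would pass to the null coordinates $\xi = \rho - s$, $\eta = \rho + s$, in which the region becomes the triangle $\{r-t \leq \xi \leq \eta \leq r+t\}$ with Jacobian $1/2$ and the integrand becomes $4(\xi+\eta)^{-2}\xi^{-\beta}$. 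Integrating in $\eta$ first yields a factor $(r+t-\xi)/\bigl(\xi(\xi+r+t)\bigr) \leq 1/\xi$, and the remaining integral $\int_{r-t}^{r+t} \xi^{-1-\beta}\,d\xi \leq \tfrac{1}{\beta}(r-t)^{-\beta}$ gives the desired contribution $\lesssim_\beta B_1(r-t)^{-\beta}$.

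The main obstacle, as the previous paragraph suggests, is this forcing estimate: one must simultaneously exploit the constraints $\rho - s \geq r - t$ and $\rho + s \geq r + t - 2s$ on the two null-directions separately in order to recover the correct power of $(r-t)$ rather than an artificially larger one that would come from dropping the size of $\rho$. Once the change to null coordinates is made, the remaining computations are elementary, and summing the three contributions and dividing by $r$ yields the claimed pointwise estimate with a constant $C = C(\alpha,\beta)$.
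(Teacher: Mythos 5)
Your proposal is correct and follows essentially the same route as the paper: the reduction $w=ru$ to the 1D wave equation, d'Alembert's formula on the backward cone (which stays in $\{\rho>R\}$ since $r>R+t$), Cauchy--Schwarz with the weight $\rho^{1+2\alpha}$ for the velocity term, and a null-coordinate change of variables to integrate the $\rho^{-2}(\rho-s)^{-\beta}$ forcing. The paper uses the variables $(\rho,\,\rho-s)$ rather than $(\rho-s,\,\rho+s)$, but this is only a cosmetic difference and both yield the bound $C_\beta B_1(r-t)^{-\beta}$.
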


\begin{remark}
 In the proof of Lemma \ref{lm1} (as well as Corollary \ref{cor1} below) we always assume that $u$ is sufficiently smooth. Otherwise we can apply standard smooth approximation techniques. 
\end{remark}

\begin{proof} 
Let us consider the function $w: \Rm^+ \times [0,T] \rightarrow \Rm$ defined by the formula $w(r,t) = r u(r,t)$. One can check that the function $w$ satisfies the following wave equation defined on $R \times [0,T]$
\[
 \partial_t^2 w - \partial_r^2 w = r F(r,t).
\]
An explicit formula for the solution to a one-dimensional wave equation shows that
\begin{align}
 w(r_0,t_0) = & \frac{1}{2} \left[w(r_0-t_0, 0) + w(r_0+t_0, 0)\right] + \frac{1}{2} \int_{r_0-t_0}^{r_0+t_0} \partial_t w(r,0) dr \nonumber \\
  & \qquad + \frac{1}{2} \int_{0}^{t_0} \int_{r_0-t_0+t}^{r_0+t_0-t} r F(r,t)\, dr dt, \label{1D formula}
\end{align}
whenever $r_0 > t_0+ R$ and $t_0 \in [0,T]$. Our assumptions on $F$ and the initial data $u_0$, $u_1$ give the upper bounds 
\begin{align*}
 &|w(r_0-t_0,0)| \leq A_1 (r_0-t_0)^{-\alpha};&  &|w(r_0+t_0,0)| \leq A_1 (r_0+t_0)^{-\alpha};& &r F(r,t) \leq B_1 r^{-2} (r-t)^{-\beta};&
\end{align*} 
and
\begin{align*}
\left| \int_{r_0-t_0}^{r_0+t_0} \partial_t w(r,0) dr\right| & = \left| \int_{r_0-t_0}^{r_0+t_0} r u_1(r) dr \right|\\
 & \leq  \left(\int_{r_0-t_0}^{r_0+t_0} r^{-1-2\alpha} dr\right)^{1/2} \left(\int_{r_0-t_0}^{r_0+t_0} r^{3+2\alpha} |u_1(r)|^2 dr\right)^{1/2}\\
 & \lesssim_\alpha (r_0-t_0)^{-\alpha} \left(\int_{|x| > r_0-t_0} |x|^{1+2\alpha} |u_1(x)|^2 dx\right)^{1/2}\\
 & \leq A_1 (r_0-t_0)^{-\alpha}.
\end{align*}
We then plug the upper bounds above into the identity \eqref{1D formula} and obtain 
\begin{align*}
 |w(r_0,t_0)| & \leq \frac{A_1}{2} \left[(r_0 - t_0)^{-\alpha} + (r_0+t_0)^{-\alpha}\right] + \frac{1}{2} \left| \int_{r_0-t_0}^{r_0+t_0} \partial_t w(r,0) dr\right| \\
 & \qquad \qquad + \frac{B_1}{2} \int_{0}^{t_0} \int_{r_0-t_0+t}^{r_0+t_0-t} r^{-2} (r-t)^{-\beta}\,  dr dt\\
 & \leq C_{\alpha} A_1 (r_0 - t_0)^{-\alpha} + \frac{B_1}{2} \int_{r_0-t_0}^{r_0+t_0} \int_s^{(r_0 + t_0 +s)/2} r^{-2} s^{-\beta}\, dr ds\\
 & \leq C_{\alpha} A_1 (r_0 - t_0)^{-\alpha} + \frac{B_1}{2} \int_{r_0-t_0}^{r_0+t_0}  s^{-1-\beta} ds\\
 & \leq C_{\alpha} A_1 (r_0 - t_0)^{-\alpha} + C_{\beta} B_1 (r_0 - t_0)^{-\beta}.
\end{align*}
Here we deal with the double integral by the change of variables $(r,s) = (r, r-t)$. Finally we recall $w = ru$, divide both sides of the inequality above by $r_0$ and finish the proof. 
\end{proof}

\begin{proposition} \label{pointwise estimate}
Assume $3\leq p <5$. Let $(u_0,u_1)$ and $A, \eps$ be initial data and positive constants as in Theorem \ref{main1}. Fix any constant $\delta < \min\{\eps, 1/{10}\}$. Then there exist constants $B_1= B_1 (\delta) >0$ and $R= R (\delta,\eps, A)> 1$, such that the solution $u$ to (CP1) with initial data $(u_0, u_1)$ satisfies 
\begin{equation} \label{es1} 
 |u(x,t)| \leq B_1 |x|^{-1} (|x|-t)^{-\delta},\;\;\; \hbox{if}\; t\geq 0\; \hbox{and}\; |x|> t + R.
\end{equation}
\end{proposition}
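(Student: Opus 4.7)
The plan is to regard $u$ as the solution of the linear equation $\partial_t^2 u - \Delta u = F$ with forcing $F = -|u|^{p-1}u$, and to run Lemma \ref{lm1} inside a continuity/bootstrap argument in the exterior cone $\{|x|>t+R\}$. If one anticipates a bound of the form $|u(x,t)| \lesssim |x|^{-1}(|x|-t)^{-\delta}$, then $|F(x,t)| \lesssim |x|^{-p}(|x|-t)^{-p\delta}$; since $|x|\ge R\ge 1$ and $p\ge 3$, this is dominated by a constant multiple of $|x|^{-3}(|x|-t)^{-p\delta}$, which matches the hypothesis of Lemma \ref{lm1} with $\beta = p\delta$. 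The requirement $\beta < 1/2$ forces $\delta < 1/(2p)$, and since $p < 5$ this is exactly what the assumption $\delta < 1/10$ buys.

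Next I would verify the data hypotheses of Lemma \ref{lm1} with $\alpha = \delta$. Using Remarks \ref{integral in r} and \ref{point-wise es and energy} together with $\delta < \eps$, for $R \ge 1$ and $|x| > R$,
\[
|u_0(x)| \le A|x|^{-1-\eps} \le AR^{-(\eps-\delta)}|x|^{-1-\delta}, \qquad \int_{|x|>R} |x|^{1+2\delta}|u_1(x)|^2\,dx \lesssim A^2 R^{-2(\eps-\delta)}.
\]
The key observation is that, because $\delta < \eps$, by enlarging $R$ in terms of $A,\eps,\delta$ one can reduce to the case $A_1 = 1$ in Lemma \ref{lm1}; this is precisely what will make the final constant $B_1$ independent of $A$.

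Let $C_0 = C(\delta, p\delta)$ be the constant from Lemma \ref{lm1} and define
\[
M(T) \doteq \sup_{0 \le t \le T,\ |x| > t+R} |x|(|x|-t)^\delta |u(x,t)|.
\]
The step above gives $M(0) \le AR^{-(\eps-\delta)} \le 1$. Assuming $M(T) \le 2C_0$ on some interval, the forcing satisfies $|F(x,t)| \le (2C_0)^p|x|^{-3}(|x|-t)^{-p\delta}$ in the exterior cone, and Lemma \ref{lm1} applied with $A_1=1$, $B_1=(2C_0)^p$, $\alpha=\delta$, $\beta=p\delta$ yields
\[
|u(x,t)| \le C_0|x|^{-1}(|x|-t)^{-\delta}\bigl[1+(2C_0)^p R^{-(p-1)\delta}\bigr], \qquad |x|>t+R,
\]
after using $(|x|-t)^{-(p-1)\delta} \le R^{-(p-1)\delta}$. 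Enlarging $R$ once more (now depending only on $\delta$ and $p$) so that $(2C_0)^p R^{-(p-1)\delta} \le 1/2$, one obtains $M(T) \le \tfrac{3}{2}C_0 < 2C_0$. Combined with $M(0) \le 1 < 2C_0$ and a continuity argument, this closes the bootstrap and gives the conclusion with $B_1 = 2C_0$, depending only on $\delta$ (and on the fixed $p$).

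The remaining points are technical rather than substantive. Lemma \ref{lm1} is pointwise and formally requires more regularity than $\dot H^1 \times L^2$; this is handled by smoothly approximating the data and passing to the limit using finite propagation speed, as in the remark following Lemma \ref{lm1}. The continuity of $M(T)$ follows from the pointwise continuity of a radial $\dot H^1$ solution away from $r=0$, together with the fact that the region $\{|x|>t+R\}$ varies continuously in $T$. The conceptual heart of the argument is simply the balanced choice $(\alpha,\beta) = (\delta,p\delta)$: the hypothesis $\delta < \eps$ produces the small prefactor $R^{-(\eps-\delta)}$ from the data, while $\delta < 1/10$ keeps $p\delta$ strictly below the threshold $1/2$ needed by Lemma \ref{lm1}; there is no further obstacle.
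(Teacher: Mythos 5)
Your reduction to Lemma \ref{lm1} is essentially the paper's: you treat $-|u|^{p-1}u$ as a forcing term, take $\alpha=\delta$ and $\beta=p\delta<1/2$ (the paper instead dominates $(|x|-t)^{-p\delta}$ by $(|x|-t)^{-3\delta}$ using $|x|-t>R\geq 1$ and works with $\beta=3\delta$, but both are admissible), and you extract the needed smallness from the factors $R^{-(\eps-\delta)}$ and $R^{-(p-1)\delta}$ by enlarging $R$, which is a legitimate alternative to the paper's choice of small constants $A_1,B_1$ with $B_1>C(A_1+B_1^3)$. The numerology is correct, and the verification of the data hypotheses via Remarks \ref{integral in r} and \ref{point-wise es and energy} is exactly as in the paper.

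The gap is in the closing mechanism. Your quantity $M(T)$ is a supremum over the unbounded region $\{|x|>t+R\}$ of $|u|$ against the weight $|x|(|x|-t)^{\delta}$, which grows at spatial infinity; the only a priori pointwise information available on $u(\cdot,t)$ for $t>0$ is the radial Sobolev bound $|u(x,t)|\lesssim \|u(\cdot,t)\|_{\dot{H}^1}\,|x|^{-1/2}$, which leaves $|x|(|x|-t)^{\delta}|u(x,t)|$ potentially unbounded as $|x|\to\infty$. Thus $M(T)$ is not known to be finite for any $T>0$ before the proposition is proved, and your justification of continuity (``pointwise continuity of $u$ away from $r=0$'') does not address this: a supremum of a continuous function over an unbounded region against a growing weight need be neither finite nor continuous in $T$, so the set $\{T: M(T)\le 2C_0\}$ cannot be shown to be open and the bootstrap does not start. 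The finiteness of $M(T)$ is essentially the content of the statement, so the argument as written is circular. The paper sidesteps this by running the self-improvement step not on $u$ but on the Picard iterates $\tilde{u}_n$ of the local fixed-point scheme on a short time interval, starting from $\tilde{u}_0\equiv 0$ (for which the bound is trivial): each iterate satisfies the bound with the uniform constant by induction on $n$, the limit inherits it, and a second induction over consecutive time intervals extends it to all $t\geq 0$. Your proof can be repaired by substituting this device (or by first establishing the estimate for smooth, rapidly decaying approximating data, where qualitative spatial decay makes $M(T)$ finite, and passing to the limit via Theorem \ref{perturbation theory 2}); without some such step the continuity argument is incomplete.
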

\begin{proof}
Let $C = C(\delta, 3\delta)$ be the constant as in the conclusion of Lemma \ref{lm1}. We can always find two small positive constants $A_1 = A_1 (\delta)$ and $B_1 = B_1 (\delta) < 1$, such that
\[
 B_1 > C (A_1 + B_1^3).
\]
By Remark \ref{integral in r}, Remark \ref{point-wise es and energy} and the assumption $\delta < \eps$, we can always find a large constant $R = R(A, \eps, \delta) > 1$, such that if $|x| > R$, then
\begin{align*}
 &|u_0(x)| < A_1 |x|^{-1-\delta};& &\int_{|x|>R} |x|^{1+2\delta} |u_1(x)|^2 dx < A_1^2.&
\end{align*}
We claim that these constants $B_1$ and $R$ work. In fact, If $t_1$ is sufficiently small, then the restriction of solution $u$ to the time interval $[0,t_1]$ can be obtained by a fixed-point argument according to our local theory. More precisely, if we set $\tilde{u}_{0} \equiv 0$ and define
\[
 \tilde{u}_{n+1} (\cdot, t) = \mathbf{S}_{L,0} (t) (u_0, u_1) + \int_{0}^t \frac{\sin ((t-\tau)\sqrt{-\Delta})}{\sqrt{-\Delta}} F(\tilde{u}_{n} (\cdot, \tau)) d\tau,
\]
where $F(u) = - |u|^{p-1}u$, then we have 
\[
 \lim_{n \rightarrow \infty} \|\tilde{u}_n - u\|_{L^{\frac{2p}{p-3}} L^{2p} ([0,t_1] \times \Rm^3)} = 0. 
\]
An induction argument immediately follows:
\begin{itemize}
 \item [(I)] The function $\tilde{u}_0$ satisfies the inequality \eqref{es1} if $t \in [0,t_1]$; 
 \item [(II)] If $\tilde{u}_n$ satisfies \eqref{es1} for $t \in [0,t_1]$, then we have 
 \[
  |F(\tilde{u}_n(x,t))| = \left|B_1 |x|^{-1} (|x|-t)^{-\delta}\right|^p \leq B_1^3 |x|^{-3} (|x|-t)^{-3\delta}, \quad \hbox{if}\; |x|>t+R \; \hbox{and}\; 0 \leq t \leq t_1.     
 \]
Thus we can apply Proposition \ref{lm1} and obtain 
 \begin{align*}
   \left|\tilde{u}_{n+1} (x,t)\right| \leq &  C(\delta, 3 \delta) |x|^{-1} \left[ A_1 (|x|-t)^{-\delta} + B_1^3 (|x|-t)^{-3\delta} \right] \\
   \leq  & C(A_1+B_1^3) |x|^{-1} (|x|-t)^{-\delta} \\
   \leq & B_1 |x|^{-1} (|x|-t)^{-\delta},
 \end{align*}
 whenever $t \in [0,t_1]$ and $|x| > t+ R$. 
\end{itemize}
In summary, $\tilde{u}_n$ satisfies \eqref{es1} for all $n \geq 0$ and $t \in [0,t_1]$. Passing to the limit, we conclude that $u$ satisfies \eqref{es1} for $t \in [0,t_1]$. In order to generalize this to all time $t \in [0,T]$ we only need to iterate our argument above. More details about this ``double induction'' argument can be found in Proposition 6.16 of the author's joint work \cite{subhyper} with G. Staffilani.  
\end{proof}

\begin{corollary} \label{cor1}
 Let $(u_0,u_1)$ be initial data as in Theorem \ref{main1} and $A$, $\eps$, $\delta$, $B_1$, $R$ be constants associated to it as above. Then there exist a function $f: [R,\infty) \rightarrow \Rm$ with 
 \begin{align*}
    \int_R^\infty s^{1+\delta} |f(s)|^2 \, ds \lesssim_{A, \eps, \delta} 1 
 \end{align*}
 so that for all $t\geq 0$ and $r> t + R$ the function $w(r,t) = r u(r,t)$ satisfies 
 \begin{align}
 &|w_t (r,t)+ w_r(r,t)| \leq f (r+t);& &|w_t(r,t) - w_r(r,t)| \leq f (r-t).& \label{ineq1} 
\end{align}
\end{corollary}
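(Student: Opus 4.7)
The key observation is that since $u$ is radial, $w(r,t) = r u(r,t)$ satisfies the one-dimensional wave equation $w_{tt} - w_{rr} = G(r,t)$ with source $G(r,t) = -r|u(r,t)|^{p-1}u(r,t)$. I would introduce the null derivatives $\eta_\pm = w_t \pm w_r$, which satisfy the transport identities $(\partial_t - \partial_r)\eta_+ = G$ and $(\partial_t + \partial_r)\eta_- = G$. Integrating along the associated characteristics from time $0$ to time $t$ yields
\[
 \eta_+(r,t) = \eta_+(r+t,\, 0) + \int_0^t G(r+t-\tau,\, \tau)\, d\tau,
\]
\[
 \eta_-(r,t) = \eta_-(r-t,\, 0) + \int_0^t G(r-t+\tau,\, \tau)\, d\tau.
\]
A direct check shows that if $r > t + R$ and $t \geq 0$, then the entire characteristic segment remains inside the region $\{r' > t' + R\}$ where Proposition \ref{pointwise estimate} applies, so the source estimate $|G(r,t)| \leq B_1^p r^{1-p} (r-t)^{-p\delta}$ may be used throughout.

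Next I would bound each characteristic integral. Along the forward characteristic $r - t = \tilde s$, the light-cone distance is the constant $\tilde s$, so
\[
 \int_0^t |G(r-t+\tau,\, \tau)|\, d\tau \leq B_1^p\, \tilde s^{-p\delta} \int_{\tilde s}^r \rho^{1-p}\, d\rho \lesssim_p B_1^p\, \tilde s^{\,2-p-p\delta}.
\]
Along the backward characteristic $r+t = s$ the light-cone distance becomes $s - 2\tau$; combining the fact that $r > t + R$ forces $r \geq s/2$ with the elementary integral $\int_0^t (s-2\tau)^{-p\delta} d\tau \leq s^{1-p\delta}/[2(1-p\delta)]$ (valid because $p\delta < 1/2$), one gets
\[
 \int_0^t |G(r+t-\tau,\, \tau)|\, d\tau \leq B_1^p (s/2)^{1-p} \int_0^t (s-2\tau)^{-p\delta} d\tau \lesssim_p B_1^p\, s^{\,2-p-p\delta}.
\]
Setting $f(s) = |\eta_+(s, 0)| + |\eta_-(s, 0)| + C_p B_1^p\, s^{\,2-p-p\delta}$ for $s \geq R$ then gives both $|\eta_+(r,t)| \leq f(r+t)$ and $|\eta_-(r,t)| \leq f(r-t)$, which are exactly the required pointwise inequalities.

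Finally, I verify the weighted $L^2$ estimate $\int_R^\infty s^{1+\delta} |f(s)|^2\, ds \lesssim_{A,\eps,\delta} 1$ by splitting $f$ into three pieces. Using $|\eta_\pm(s,0)| \leq s|u_1(s)| + |u_0(s)| + s |u_0'(s)|$, the contributions from $s|u_1|$ and $s|u_0'|$ lead to $\int_R^\infty s^{3+\delta}(|u_1|^2 + |u_0'|^2)\, ds$, which is finite by Remark \ref{integral in r} because $\delta < \eps$. The contribution from $|u_0|$ yields $\int_R^\infty s^{1+\delta}|u_0|^2\, ds$, controlled by the pointwise bound $|u_0(s)| \leq A s^{-1-\eps}$ from Remark \ref{point-wise es and energy}. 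Finally the correction term produces $\int_R^\infty s^{5-2p+\delta(1-2p)}\, ds$, which converges at infinity whenever $p \geq 3$ since then $5 - 2p \leq -1$ and the extra $\delta(1-2p)$ is negative. The most delicate point in the argument is the integration along the backward characteristic, where $(s-2\tau)^{-p\delta}$ has a near-cone singularity at $\tau = t$; this is what forces us to require $p\delta < 1$, comfortably guaranteed by the assumption $\delta < 1/10$ together with $p < 5$.
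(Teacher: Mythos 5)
Your proposal is correct and follows essentially the same route as the paper: decompose into the null derivatives $w_t\pm w_r$, integrate the transport equations along the two characteristics, control the source term by the pointwise bound of Proposition \ref{pointwise estimate} (noting both characteristic segments stay in the exterior region $r'-t'>R$), and verify the weighted $L^2$ bound on $f$ from Remark \ref{integral in r} and the decay of $u_0$. The only cosmetic differences are that you keep the full exponent $p$ in the nonlinear bound (the paper crudely uses the cube, since $B_1<1$ and $r^{-1}(r-t)^{-\delta}<1$) and you carry $|u_0(s)|$ as a separate summand in $f$ rather than absorbing it into the $Cs^{-1-\delta}$ tail.
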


\begin{proof}
 For simplicity we define $z_1(r,t) = w_t(r,t) + w_r(r,t)$ and $z_2(r,t) = w_t(r,t) - w_r(r,t)$. Since $z_1, z_2$ satisfy the identities 
 \begin{align*}
  \frac{\partial}{\partial s} [z_1 (r +t -s, s)] & = (r +t -s) F(r +t -s, s);\\
  \frac{\partial}{\partial s} [z_2 (r -t +s, s)] & = (r -t +s) F(r -t +s, s);
 \end{align*}
 where the function $F$ is defined as $F(r,t) = -|u(r,t)|^{p-1} u(r,t)$, we can integrate from $s = 0$ to $s= t$ by the fundamental theorem of calculus 
 \begin{align*}
 & z_1(r, t) = z_1 (r +t, 0) + \int_{0}^{t} (r +t -s) F(r +t-s, s) ds;\\
 & z_2 (r, t) = z_2 (r -t, 0) + \int_{0}^{t} (r -t +s) F(r -t+s, s) ds.
 \end{align*}
 Next we rewrite $z_1(r+t,0), z_2(r-t,0)$ in term of $u_0, u_1$ by their definition and obtain 
 \begin{align*}
 & z_1(r,t) =  (r+t) \left[u_1(r+t) + \partial _r u_0 (r+t)\right] + u_0(r+t) + \int_{0}^{t} (r +t -s) F(r +t-s, s)\, ds; \\
 & z_2(r,t)  = (r-t) \left[u_1(r-t) - \partial_r u_0 (r-t)\right]  - u_0(r-t) +  \int_{0}^{t} (r -t +s) F(r -t+s, s)\, ds.
 \end{align*}
We claim that we can choose $f (s) = s |u_1(s)| + s |\partial_r u_0 (s)| + C s^{-1-\delta}$ for a suitable constant $C = C(A, \eps, \delta)$. It follows Remark \ref{integral in r}, the point-wise estimate $u(r,t) \lesssim r^{-1}(r-t)^{-\delta}$ and a couple of estimates on the integrals in the expression of $z_1, z_2$ . For the first integral we have
 \begin{align*}
  \left|\int_{0}^{t} (r +t -s) F(r +t-s, s)\, ds \right| \lesssim & \int_0^t (r+t-s)\left[(r+t-s)^{-1} (r+t-2s)^{-\delta}\right]^3 ds\\
  \lesssim & (r+t)^{-2} \int_0^t (r+t-2s)^{-\delta}\, ds \\
  \lesssim & (r+t)^{-1-\delta}. 
 \end{align*}
 The second integral can be dealt with in a similar way
  \begin{align*}
  \left|\int_{0}^{t} (r -t+s) F(r -t+s, s)\, ds \right| \lesssim & \int_0^t (r -t+s)\left[(r-t+s)^{-1} (r-t)^{-\delta}\right]^3 ds\\
  \lesssim & (r-t)^{-\delta} \int_0^t (r-t+s)^{-2}\, ds \\
  \lesssim & (r-t)^{-1-\delta}. 
 \end{align*}

 \end{proof}  

\section{A transformation} \label{sec: transformation}

Let $u(x,t)$ be a global and radial solution to (CP1). We consider the function $v = \mathbf{T} u$ defined by 
\[
 v(y, \tau) = \frac{\sinh |y|}{|y|} e^\tau u\left( e^\tau \frac{\sinh |y|}{|y|}\cdot y, t_0 + e^\tau \cosh |y|\right), \quad (y, \tau) \in \Rm^3 \times \Rm.
\]
Here $t_0$ is a negative number to be determined later. This transformation can be rewritten in the form of $(\mathbf{T} u)(y,\tau) =  \frac{\sinh |y|}{|y|} e^\tau u (\mathbf{\tilde{T}}(y, \tau))$, where the geometric transformation $\mathbf{\tilde{T}}: \Rm^3\times \Rm \rightarrow \{(x,t) \in \Rm^3 \times \Rm: t-t_0 > |x|\}$ is defined by 
\[
 \mathbf{\tilde{T}}(y, \tau) = \left( e^\tau \frac{\sinh |y|}{|y|}\cdot y, t_0 + e^\tau \cosh |y|\right).
\]
In particular, $\mathbf{\tilde{T}}$ maps the hyperplane $\tau = \tau_0$ in the $y$-$\tau$ space-time to the upper sheet of the hyperboloid $(t-t_0)^2 - |x|^2 = e^{2\tau_0}$ in the $x$-$t$ space-time. 

\paragraph{Radial expression} The function $v$ is still a radial function and can be given in term of polar coordinates $(s, \Theta, \tau) \in [0,\infty) \times {\mathbb S}^2 \times \Rm$ by
\[
 v(s, \Theta, \tau) = \frac{\sinh s}{s} e^\tau u(e^\tau \sinh s \cdot \Theta, t_0+ e^\tau \cosh s).
\]
For simplicity we can omit $\Theta$ and write 
\[
   v(s,\tau) = \frac{\sinh s}{s} e^\tau u(e^\tau \sinh s, t_0 + e^\tau \cosh s).
\] 
\paragraph{Differentiation} Let us recall that the function $w(r,t) = ru(r,t)$  satisfies the equation $w_{tt} - w_{rr} = - r|u|^{p-1} u$, we can rewrite the function $sv(s, \tau)$ in the form of 
\[
 s v(s, \tau) = w(e^\tau \sinh s, t_0 + e^\tau \cosh s). 
\] 
A simple calculation shows 
\begin{align}
 &(sv)_\tau =  (e^\tau \sinh s) w_r + (e^\tau \cosh s) w_t;&  &(sv)_s = (e^\tau \cosh s) w_r + (e^\tau \sinh s) w_t. &  \label{derivative of sv}
\end{align}
The values of $w_r$ and $w_t$ here are taken at the point $(e^\tau \sinh s, t_0 + e^\tau \cosh s)$. Next we can differentiate again and obtain \footnote{Here we temporarily assume that the functions involved are sufficiently smooth. Otherwise we can apply the standard smoothing approximation techniques}. 
\begin{align*}
 (sv)_{\tau \tau} = & (e^\tau \sinh s) w_r + (e^\tau \sinh s)^2 w_{rr} + (e^\tau \sinh s)(e^\tau \cosh s) w_{rt}\\
  & + (e^\tau \cosh s) w_t + (e^\tau \cosh s)(e^\tau \sinh s) w_{tr} + (e^\tau \cosh s)^2 w_{tt};\\
  (sv)_{ss} = & (e^\tau \sinh s) w_r + (e^\tau \cosh s)^2 w_{rr} + (e^\tau \cosh s)(e^\tau \sinh s)w_{rt}\\
  & + (e^\tau \cosh s) w_t + (e^\tau \sinh s)(e^\tau \cosh s) w_{tr} + (e^\tau \sinh s)^2 w_{tt}.
\end{align*}
Therefore we have (let us recall $r = e^\tau \sinh s$)
\begin{align*}
 v_{\tau \tau} - v_{ss} - \frac{2}{s} v_s = & \frac{1}{s} \left[(sv)_{\tau \tau} - (sv)_{ss}\right] = \frac{e^{2\tau}}{s} \left[w_{tt} - w_{rr}\right] 
 =  -\frac{e^{2\tau}}{s} r |u|^{p-1} u \\
 = & -\left(\frac{s}{\sinh s}\right)^{p-1} e^{-(p-3)\tau} \left|\frac{\sinh s}{s} e^\tau u\right|^{p-1} \frac{\sinh s}{s} e^\tau u\\
 = & -\left(\frac{s}{\sinh s}\right)^{p-1} e^{-(p-3)\tau} |v|^{p-1} v. 
\end{align*}
In other words, $v(y,\tau)$ satisfies the non-linear wave equation 
\[
 v_{\tau \tau} - \Delta_y v = - \left(\frac{|y|}{\sinh |y|}\right)^{p-1} e^{-(p-3)\tau} |v|^{p-1}v, \qquad (\tau, y) \in \Rm \times \Rm^3. \quad (CP3)
\]
Finally a basic calculation gives the following change of variables formula for integrals of radial functions
\begin{equation}
 dx\, dt = 4\pi r^2 dr\, dt = 4\pi e^{4\tau} \sinh^2 s \,ds \, d\tau = e^{4\tau} \left(\frac{\sinh |y|}{|y|}\right)^2 dy\, d\tau. \label{change of variables}
\end{equation}

\section{Proof of the Main Theorem}
 
Let us consider a solution $u$ to (CP1) as given in Theorem \ref{main1} with the constants $A, \eps$. We first fix a number $\delta = \min\{\eps/2, 1/10\}$ and let $B_1$, $R$ be the constants as given in  Proposition \ref{pointwise estimate}. Please note that all these constants $\delta$, $B_1$ and $R$ are determined solely by $A$ and $\eps$. 
Next we fix a negative time $t_0 = - \sqrt{R^2 +1} -1$ and perform the transformation $v = \mathbf{T} u$ as described in the previous section. We claim 

\begin{lemma} \label{finite energy}
 There exists a time $\tau \in [-1,0]$, so that the energy 
\begin{align*}
 E(\tau) & = \int_{\Rm^3} \left[\frac{1}{2}|\nabla_y v(y, \tau)|^2 + \frac{1}{2}|v_\tau(x,\tau)|^2 + e^{-(p-3) \tau} \left(\frac{|y|}{\sinh |y|}\right)^{p-1} \frac{|v(y,\tau)|^{p+1}}{p+1}\right] dy \\
 &< C(A, \eps). 
\end{align*}
Here $C(A, \eps)$ is a finite constant determined solely by the constants $A$ and $\eps$. 
\end{lemma}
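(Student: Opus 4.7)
The plan is to establish the averaged estimate $\int_{-1}^{0} E(\tau)\, d\tau \leq C(A,\eps)$; since $E(\tau)\geq 0$ and $[-1,0]$ has unit length, the mean-value inequality then yields a $\tau^*\in[-1,0]$ with $E(\tau^*)\leq C(A,\eps)$ as required. I would first convert both portions of the averaged energy into spacetime integrals over the region $\Omega=\{(x,t):e^{-2}\leq (t-t_0)^2-|x|^2\leq 1,\,t>t_0\}$ sandwiched between the hyperboloids $\tau=-1$ and $\tau=0$. Using $u=(s/\sinh s)e^{-\tau}v$ together with (\ref{change of variables}), the potential part becomes $\int_\Omega |u|^{p+1}/(p+1)\,dx\,dt$. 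For the kinetic/gradient part, radiality and integration by parts in $s$ (boundary terms vanish since $W:=sv$ satisfies $W(0,\tau)=0$) give $\int_{\Rm^3}(|\nabla_y v|^2+v_\tau^2)\,dy=4\pi\int_0^\infty(W_s^2+W_\tau^2)\,ds$; the identity (\ref{derivative of sv}) then yields
\[
 W_s^2+W_\tau^2=\tfrac12\bigl[\alpha^2 z_1^2+\beta^2 z_2^2\bigr],\qquad\alpha=r+(t-t_0),\;\beta=(t-t_0)-r,\;z_{1,2}=w_t\pm w_r,
\]
and the Jacobian $dr\,dt=\alpha\beta\,ds\,d\tau$ converts the kinetic/gradient contribution to $\pi\int\!\!\int_{\Omega}\bigl[\tfrac{\alpha}{\beta}z_1^2+\tfrac{\beta}{\alpha}z_2^2\bigr]\,dr\,dt$.

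To bound the potential integral I would split $\Omega$ along $|x|=|t|+R$: on the bounded inner piece use $\|u(\cdot,t)\|_{L^{p+1}}^{p+1}\leq(p+1)E$ (Remark \ref{point-wise es and energy} combined with energy conservation) with $t$-extent $O(R)$; on the outer piece apply Proposition \ref{pointwise estimate}, extended to $t\leq 0$ by the time-reversal symmetry of (CP1), to obtain $|u|\lesssim |x|^{-1}(|x|-|t|)^{-\delta}$, noting that at fixed large $|x|$ the slice of $\Omega$ has $t$-thickness $\lesssim 1/|x|$, which makes the tail integral converge.

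For the kinetic/gradient integral I would change to null coordinates $\xi=r+t$, $\eta=r-t$, so that $\alpha=\xi+|t_0|$, $\beta=|t_0|-\eta$, and observe that $\eta\in[|t_0|-1,|t_0|)$ throughout $\Omega$. Split $\Omega=\Omega_a\cup\Omega_b$ where $\Omega_a=\Omega\cap\{\xi>R\}$ and $\Omega_b$ is its bounded complement (contained in $\{t<0\}$). On $\Omega_a$, Corollary \ref{cor1} together with its time-reversed analogue furnishes $|z_1(r,t)|\leq g(\xi)$ and $|z_2(r,t)|\leq g(\eta)$ for a single function $g$ with $\int_R^{\infty}g(s)^2 s^{1+\delta}\,ds\lesssim_{A,\eps}1$; performing the $\alpha$- or $\beta$-integral first along each strip $\alpha\beta\in[e^{-2},1]$ produces the universal constant $\int d\alpha/\alpha=\ln(1/e^{-2})=2$, reducing the two contributions to $\int_R^{\infty}(\xi+|t_0|)g(\xi)^2\,d\xi$ and $\int_{|t_0|-1}^{|t_0|}(|t_0|-\eta)g(\eta)^2\,d\eta$, both bounded by a negative power of $R$ via the weight $s^{1+\delta}$. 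On the bounded $\Omega_b$, $\alpha/\beta\lesssim R^2$, and an integration by parts using $w=ru$ yields the instantaneous identity $\int_0^{\infty}(w_r^2+w_t^2)\,dr=E/\pi$ at each fixed $t$; combined with $z_{1,2}^2\leq 2(w_r^2+w_t^2)$ and the fact that $\Omega_b$ has $t$-extent $O(R)$, this gives a contribution $\lesssim R^3 E$.

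The main obstacle is the treatment of the bounded inner region $\Omega_b$, where neither $\xi$ nor $\eta$ is forced to be large, so the characteristic decay of Corollary \ref{cor1} is unavailable and one must fall back on the crude instantaneous $r$-integral identity for $w_r^2+w_t^2$; the resulting estimate carries a power of $R$, but this is absorbed into $C(A,\eps)$ since $R=R(A,\eps,\delta)$ depends only on $A$ and $\eps$. A secondary technicality is the time-reversal extension of Proposition \ref{pointwise estimate} and Corollary \ref{cor1} to $t\leq 0$, which follows from the invariance of the initial-data norms under $(u_0,u_1)\mapsto (u_0,-u_1)$ and the time reversibility of (CP1).
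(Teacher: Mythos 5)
Your argument is correct and rests on the same pillars as the paper's proof: the mean-value trick on $\tau\in[-1,0]$, the identities $(sv)_\tau\pm(sv)_s=\bigl((t-t_0)\pm r\bigr)(w_t\pm w_r)$ coming from \eqref{derivative of sv}, the characteristic bounds of Corollary \ref{cor1} with the weighted-$L^2$ function $f$, and energy conservation of $u$ on a bounded spacetime region. The differences are in the decomposition. The paper averages only the ``small radius'' portion of $E_0(\tau)$ (the part with $t<0$) and bounds the large-radius portion uniformly in $\tau$, splitting along $t=0$; since its bounded region $K$ is exactly $\Omega\cap\{t\le 0\}$, it never needs the Section~4 estimates at negative times. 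Your split along $\xi=r+t=R$ leaves a sliver of $\Omega_a$ with $-1<t<0$ on which you must invoke time-reversed versions of Proposition \ref{pointwise estimate} and Corollary \ref{cor1}; that extension is legitimate (the hypotheses of Theorem \ref{main1} are invariant under $(u_0,u_1)\mapsto(u_0,-u_1)$ and the data norms are unchanged), but you could avoid it entirely by enlarging $\Omega_b$ to $\Omega\cap(\{\xi\le R\}\cup\{t<0\})$, which is still bounded. Your observation that $\eta=r-t$ is pinned in $(|t_0|-1,|t_0|)$ throughout $\Omega$, so that $r-t>R$ automatically, is a clean way to see that Corollary \ref{cor1} applies on all of $\Omega\cap\{t\ge0\}$, and your $\int d\alpha/\alpha=2$ computation is the paper's change of variables $\bar r=t_0+e^{\tau}e^{s}$, $\tilde r=-t_0-e^{\tau-s}$ in disguise. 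Two cosmetic corrections: $\int_0^\infty(w_r^2+w_t^2)\,dr=\frac{1}{4\pi}\int_{\Rm^3}(|\nabla u|^2+u_t^2)\,dx\le E/(2\pi)$ is an inequality, not the identity $=E/\pi$ (the potential energy is dropped), which is harmless; and the integration by parts giving $\int_{\Rm^3}|\nabla_y v|^2\,dy=4\pi\int_0^\infty W_s^2\,ds$ requires the boundary term $sv^2\to0$ as $s\to\infty$, which your pointwise bound $|v|\lesssim s^{-1}$ in the exterior region supplies (the paper sidesteps this by retaining the extra $\int v^2\,ds$ term instead of integrating by parts).
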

\begin{remark}
 This actually means that $E(0) < C(A, \eps, p) < \infty$. 
\end{remark}

\subsection{Proof of Lemma \ref{finite energy}} 

First of all, we observe that 
\begin{align*}
 \int_{\Rm^3} e^{-(p-3) \tau} \left(\frac{|y|}{\sinh |y|}\right)^{p-1} \frac{|v(y,\tau)|^{p+1}}{p+1} dy & \lesssim_1  \left\|\left(\frac{|y|}{\sinh |y|}\right)^{p-1}\right\|_{L^{6/(5-p)}(\Rm^3)}\|v(\cdot, \tau)\|_{L^6(\Rm^3)}^{p+1}\\
 & \lesssim_1 \|v(\cdot, \tau)\|_{\dot{H}^1 (\Rm^3)}^{p+1}
\end{align*}
Therefore it suffices to show that 
\[
 E_0 (\tau)  = \int_{\Rm^3} \left[\frac{1}{2}|\nabla_y v(y, \tau)|^2 + \frac{1}{2}|v_\tau(x,\tau)|^2 \right] dy < C' (A, \eps, p).
\]
Next we use the fact that $v$ is radial and rewrite $E_0(\tau)$ in term of polar coordinates 
\begin{align*}
 E_0 (\tau) & = \int_0^\infty 2\pi \left[|v_s (s, \tau)|^2 + |v_\tau (s, \tau)|^2\right] s^2 ds.
\end{align*}
We split the integral into two parts: the integral over $[0, s_0(\tau)]$ and the integral over $[s_0(\tau), + \infty)$. 
\[
  E_0 (\tau) = \int_0^{s_0(\tau)} + \int_{s_0(\tau)}^\infty \doteq  E_0^{(1)} (\tau) + E_0^{(2)} (\tau).
\]
The radius $s_0(\tau) \doteq \cosh^{-1} (-t_0 e^{-\tau}) > \cosh^{-1} \sqrt{2}$ corresponds to the value of time $t = t_0 + e^\tau \cosh s_0 = 0$. 

\paragraph{Large radius part} In this case we have $t = t_0 + e^\tau \cosh s \geq 0$ and 
\[
 r - t  = e^\tau \sinh s - (t_0 + e^\tau \cosh s) = -t_0 - e^\tau e^{-s} \geq - t_0 - e^\tau e^{-s_0} = \sqrt{t_0^2 -e^{2\tau}} > R. 
\]
Therefore we have 
\begin{itemize}
 \item[(i)] $t_0 + e^\tau e^s = r + t \simeq r = e^\tau \sinh s \simeq e^\tau e^s$; 
 \item[(ii)] we can apply the inequalities regrading $u$, $w_r$, $w_t$ we obtained in Proposition \ref{pointwise estimate} and Corollary \ref{cor1} to obtain 
\begin{align}
 &|w_t + w_r | \leq f(t_0 + e^\tau e^s);&  &|w_t - w_r | \leq f(-t_0 - e^{\tau -s});& \label{ineq pm} \\
 &|u| \lesssim_{A, \eps} (e^\tau \sinh s)^{-1}\; \Longrightarrow \; |v| \lesssim_{A,\eps} s^{-1}.& && \label{ineq uv}
\end{align}
 All the values of $u$, $w_r$ and $w_t$ are taken at the point $(r,t) = (e^\tau \sinh s, t_0 + e^\tau \cosh s)$. 
\end{itemize}
We combine the identities \eqref{derivative of sv} with the inequalities \eqref{ineq pm} and obtain 
\begin{align*}
 2|(sv)_\tau| &= 2\left|(e^\tau \sinh s) w_r + (e^\tau \cosh s) w_t\right| = e^\tau \left|e^s (w_t + w_r) + e^{-s} (w_t - w_r)\right|\\
 & \leq e^{\tau +s} f(t_0 + e^\tau e^s) + e^{\tau -s} f(-t_0 - e^{\tau -s});\\
 2\left|(sv)_s\right| & = 2\left|(e^\tau \cosh s) w_r + (e^\tau \sinh s) w_t\right| = e^\tau \left|e^s (w_t + w_r) - e^{-s} (w_t - w_r)\right|\\
 & \leq e^{\tau +s} f(t_0 + e^\tau e^s) + e^{\tau -s} f(-t_0 - e^{\tau -s}).
\end{align*}
A basic calculation shows 
\begin{align*}
 E_0^{(1)} (\tau) & = 2 \pi \int_{s_0(\tau)}^\infty \left[|v_s (s, \tau)|^2 + |v_\tau (s, \tau)|^2\right] s^2 ds \\
 & \leq 2\pi \int_{s_0(\tau)}^\infty \left[\left|\frac{\partial (sv)}{\partial s}(s, \tau) - v(s,\tau)\right|^2 + \left|\frac{\partial (sv)}{\partial \tau} (s, \tau)\right|^2\right] ds\\
 & \leq 4\pi  \int_{s_0(\tau)}^\infty \left[\left|(sv)_s\right|^2 + \left|(sv)_\tau\right|^2 + v^2 \right] ds.
 \end{align*}
By the upper bounds on $|(sv)_\tau|$, $|(sv)_s|$, $|v|$ given above we finally obtain a universal upper bound on $E_0^{(1)}(\tau)$:
\begin{align*}
 E_0^{(1)} (\tau) & \lesssim_{A,\eps} \int_{s_0(\tau)}^\infty e^{2\tau +2s} \left|f(t_0 + e^\tau e^s)\right|^2 ds  + \int_{s_0(\tau)}^\infty e^{\tau -s} \left|f(-t_0 - e^{\tau -s})\right|^2 ds 
 + \int_{s_0(\tau)}^\infty s^{-2} ds\\
 & \lesssim \int_R^\infty \bar{r} \left|f(\bar{r})\right|^2 d\bar{r} + \int_R^{-t_0} \left|f(\tilde{r})\right|^2 d\tilde{r} + 1\\
 & \lesssim_{A,\eps} 1. 
\end{align*}
Here we need to apply the change of variables $\bar{r} = t_0 + e^\tau e^s > R$, $\tilde{r} = -t_0 - e^{\tau -s} >R$ and use the estimate (i).
In the final step we use the assumption on the function $f$ in Corollary \ref{cor1}
\[
 \int_R^\infty \bar{r}^{1+\delta} |f(\bar{r})|^2 d\bar{r} \lesssim_{A, \eps} 1. 
\] 

\begin{figure}[h]
 \centering
 \includegraphics[scale=0.85]{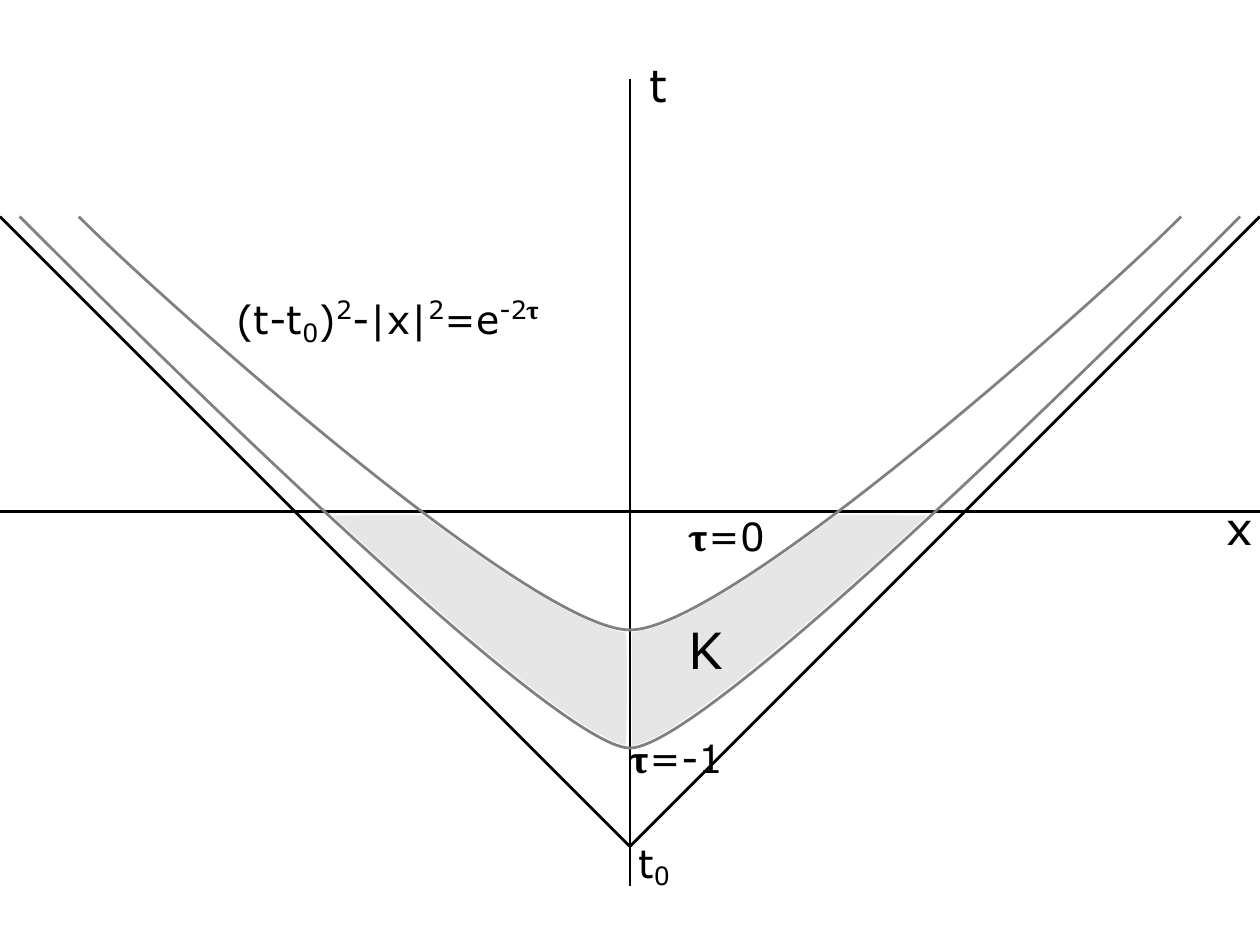}
 \caption{Illustration of region $K$} \label{f1}
\end{figure}

\paragraph{Small radius part}  Now we need to consider the upper bound of $\displaystyle \inf_{\tau \in [-1,0]} E_0^{(2)} (\tau)$, which can be dominated by an integral  
\begin{align*}
 \inf_{\tau \in [-1,0]} E_0^{(2)} (\tau) \leq & 2 \pi \int_{-1}^{0} \int_0^{s_0(\tau)} \left[|v_s (s, \tau)|^2 + |v_\tau (s, \tau)|^2\right] s^2 ds\, d\tau \\
 = & \frac{1}{2} \int_{-1}^0  \int_0^{s_0(\tau)} e^{-4\tau} \left(\frac{s}{\sinh s}\right)^2 \left[|v_s (s, \tau)|^2 + |v_\tau (s, \tau)|^2\right] 4\pi e^{4\tau} \sinh^2 s \,ds\,d\tau.
\end{align*}
Let us recall our definition of $v$ and differentiate: 
\begin{align*}
 v_{\tau} & = \frac{\sinh s}{s} e^\tau u + \frac{\sinh s \cosh s}{s} e^{2\tau} u_t + \frac{\sinh^2 s}{s} e^{2\tau} u_r; \\
 v_s & = \frac{s \cosh s -\sinh s}{s^2} e^\tau u + \frac{\sinh^2 s}{s} e^{2\tau} u_t + \frac{\sinh s \cosh s}{s} e^{2 \tau} u_r. 
\end{align*}
As a result we have 
\begin{align*}
 \left|\frac{s}{\sinh s} v_\tau\right| & \leq \left\{|u| + (t -t_0) |u_t| + r |u_r|\right\}_{(r,t) = (e^\tau \sinh s, t_0 + e^\tau \cosh s)};\\
 \left|\frac{s}{\sinh s} v_s \right| & \leq \left\{|u| + r |u_t| + (t -t_0) |u_r|\right\}_{(r,t) = (e^\tau \sinh s, t_0 + e^\tau \cosh s)}.
\end{align*}
Using these upper bounds and the change of variables formula \eqref{change of variables}, we obtain 
\begin{align*}
 \inf_{\tau \in [-1,0]} E_0^{(2)} (\tau) & \lesssim \iint_K (1+t_0^2) \left(|u_t|^2 + |\nabla u|^2 + |u|^2\right) dx\, dt\\
& \lesssim (1+t_0^2) \int_{-t_0}^0 \int_{B(0, |t_0|)} \left(|u_t|^2 + |\nabla u|^2 + |u|^{p+1} + 1\right) dx\, dt \\
& \lesssim (1+t_0^2) t_0^4 + (1+t_0^2)t_0 \tilde{E} \lesssim_{A, \eps} 1. 
\end{align*}
Here the region $K = \{(x,t): e^{-2} \leq (t-t_0)^2 - |x|^2 \leq 1, t_0 < t \leq 0\} \subseteq B(0, |t_0|) \times [-t_0, 0]$, as illustrated in figure \ref{f1}. The letter $\tilde{E}$ represents the energy of solution $u$, whose upper bound has been given in Remark \ref{point-wise es and energy}. Combining the small radius part with the large radius part, we have 
\[
 \inf_{t \in [-1,0]} E_0(\tau) \leq \sup_{t \in [-1,0]} E_0^{(1)}(\tau) + \inf_{t \in [-1,0]} E_0^{(2)}(\tau) \lesssim_{A, \eps} 1
\]
thus finish the proof of Lemma \ref{finite energy}.

\subsection{A global integral estimate} 

Now $v$ is a radial solution to (CP2) with a finite energy $E(0) \lesssim_{A, \eps} 1$.  We claim 
\begin{equation} \label{comb_ineq}
 I' \doteq \int_{0}^{\infty} \!\!\int_{\Rm^3}  e^{-(p-3) \tau} \left(\frac{|y|}{\sinh |y|}\right)^{p-1} |v(y,\tau)|^{2(p-1)}  dy\, d\tau \lesssim_{A, \eps, p} 1. 
\end{equation}  
\begin{proof} 
First of all, Proposition \ref{Morawetz} gives a Morawetz-type estimate  
\[
 \int_{0}^{\infty} \!\!\int_{\Rm^3}  e^{-(p-3) \tau} \frac{|y|^{p-1} \cosh |y|}{\sinh^p |y|} |v(y,\tau)|^{p+1}  dy\, d\tau \lesssim_1 E(0) \lesssim_{A, \eps} 1. 
\]
Since $v(\cdot, \tau)$ is a radial $\dot{H}^1(\Rm^3)$ function, we also have 
\[
  \left| v(y, \tau)\right| \lesssim \frac{\|v(\cdot, \tau)\|_{\dot{H}^1(\Rm^3)}}{|y|^{1/2}} \lesssim  \frac{(E(\tau))^{1/2}}{|y|^{1/2}} \lesssim_{A, \eps} \left(\frac{\cosh |y|}{\sinh |y|}\right)^{1/2}.
\]
A combination of these two inequalities gives 
\begin{equation} \label{Morawetz inequality}
 \int_{0}^{\infty} \!\!\int_{\Rm^3}  e^{-(p-3) \tau} \left(\frac{|y|}{\sinh |y|}\right)^{p-1} |v(y,\tau)|^{p+3}  dy\, d\tau \lesssim_{A, \eps} 1.
\end{equation}
If $p=3$, this is exactly the same inequality as \eqref{comb_ineq}. On the other hand, if  $p \in (3,5)$, then we are able to apply Proposition \ref{global corollary} and obtain another integral estimate
\begin{equation} \label{energy decay inequality}
 \int_{0}^{\infty} \int_{\Rm^3} e^{-(p-3) \tau} \left(\frac{|y|}{\sinh |y|}\right)^{p-1} |v(y,\tau)|^{p+1} dy\, d\tau  \leq \frac{p+1}{p-3} E(0) \lesssim_{A, \eps, p} 1. 
\end{equation}
Finally we can apply an interpolation between the inequalities \eqref{Morawetz inequality} and \eqref{energy decay inequality} to conclude the proof, because our assumption $p\in (3,5)$ implies that $p+1 < 2(p-1) < p+3$. 
\end{proof}

\subsection{Completion of the proof for the main theorem}

We have already known that the solution is well-defined for all time $t \in \Rm$. According to Proposition \ref{L2p2}, it suffices to show
\[
 I \doteq \int_0^\infty \int_{\Rm^3} |u(x,t)|^{2(p-1)} dx\, dt \lesssim_{A, \eps, p} 1.
\]
We first break the integral into two parts 
\begin{align*}
 I & = \int_0^\infty \int_{|x|>t+R} |u(x,t)|^{2(p-1)} dx\, dt + \int_{0}^\infty \int_{|x|<t+R} |u(x,t)|^{2(p-1)} dx\, dt\\
  & \leq \int_0^\infty \int_{|x|>t+R} |u(x,t)|^{2(p-1)} dx\, dt + \iint_{\Omega} |u(x,t)|^{2(p-1)} dx\, dt \doteq I_1 + I_2.
\end{align*}
Here the region $\Omega = \{(x,t): |x|^2 < (t-t_0)^2 -1, t>t_0\}$ satisfies (Please see figure \ref{f2})
\begin{itemize}
 \item $\Omega$ contains the region $\{(x,t): |x| < t + R, t \geq 0\}$;
 \item $\Omega$ corresponds to the positive-time part of the $y$-$\tau$ space-time. In other words we have $\Omega = \mathbf{\tilde{T}}(\{(y,\tau): \tau>0\})$. 
\end{itemize} 
\begin{figure}
 \centering 
 \includegraphics[scale=0.85]{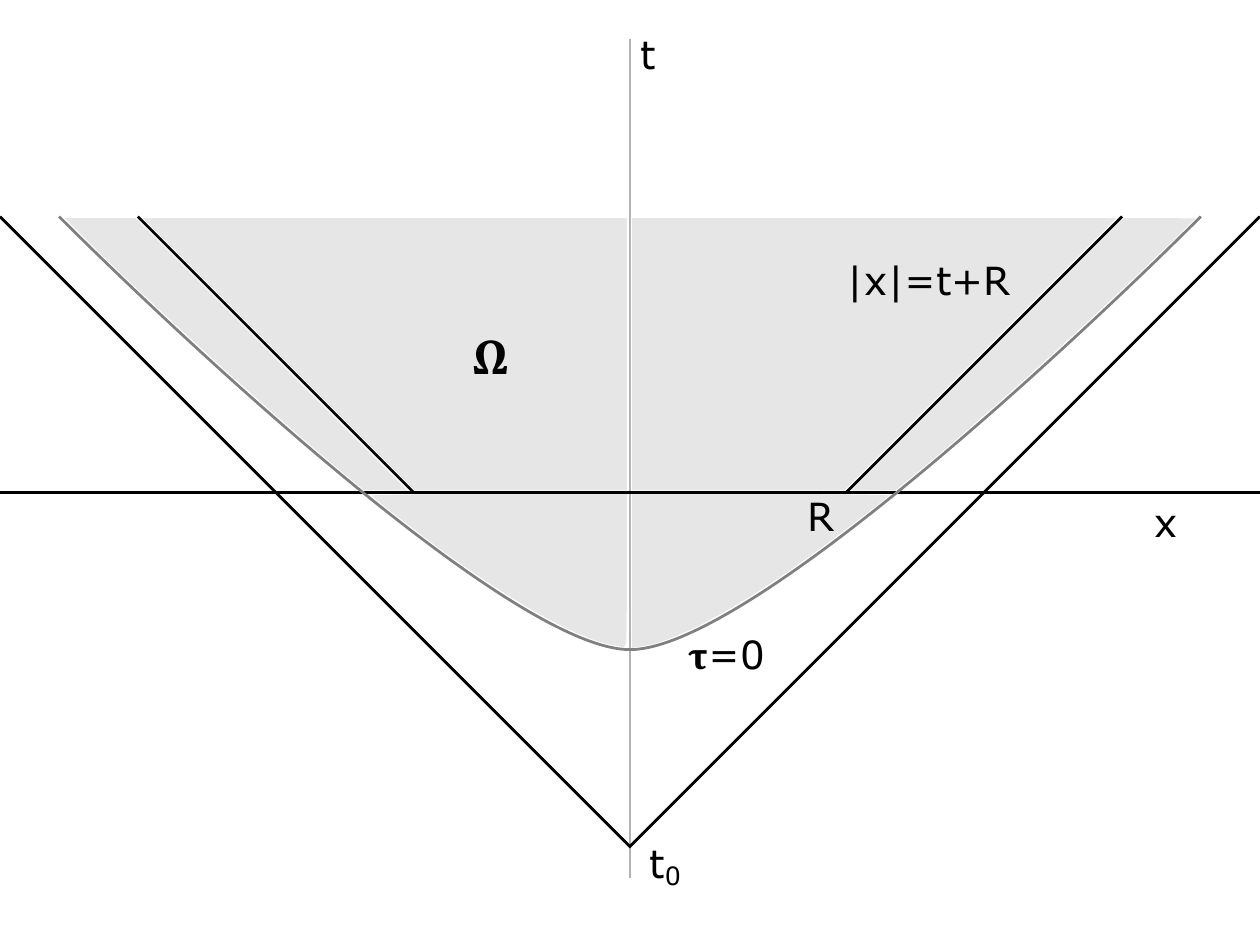}
 \caption{Illustration of the region $\Omega$} \label{f2}
\end{figure}
It is clear that $I_1 \lesssim_{A, \eps, p} 1$ since the inequality $u(r,t) \lesssim_{A, \eps} r^{-1} (r-t)^{-\delta}$ (when $r > t+R$ and $t\geq 0$) implies that
\[
 I_1 \lesssim_{A, \eps} \int_{0}^{\infty}\!\! \int_{t+R}^\infty [r^{-1} (r-t)^{-\delta}]^{2(p-1)} r^2\, dr dt = \int_R^{\infty} \int_s^\infty r^{-2(p-2)} s^{-2(p-1)\delta} \, dr ds \lesssim_{A,\eps} 1.
\]
In order to deal with $I_2$ we apply the change of variables formula \eqref{change of variables}. 
\begin{align*}
 I_2 = & \int_{0}^\infty \int_{\Rm^3} \left(e^{-\tau} \frac{|y|}{\sinh |y|}\right)^{2(p-1)} \left|\frac{\sinh |y|}{|y|} e^\tau u(\mathbf{\tilde{T}} (y,s)) \right|^{2(p-1)} 
 \cdot e^{4\tau} \left(\frac{\sinh |y|}{|y|}\right)^2 dy\, d\tau\\
 = & \int_{0}^\infty \int_{\Rm^3} e^{-2(p-3)\tau} \left(\frac{|y|}{\sinh |y|}\right)^{2p -4} |v(y,\tau)|^{2(p-1)} dy\, d\tau. 
\end{align*}
The last expression of $I_2$ is different from the left hand of \eqref{comb_ineq} (i.e. the integral $I'$) only in the first two exponents. A simple comparison shows that $I_2 \leq I' \lesssim_{A, \eps, p} 1$. This finishes the proof of our main theorem.

\end{document}